\theoremstyle{plain}
\numberwithin{equation}{section}
\newtheorem{Theorem}{Theorem}
\newtheorem{Lemma}[Theorem]{Lemma}
\newtheorem{Proposition}[Theorem]{Proposition}
\newtheorem{Corollary}[Theorem]{Corollary}
\newtheorem{Criterion}[Theorem]{Criterion}
\newtheorem{Example}[Theorem]{Example}
\begin{document}

\title[Riesz basis property of Hill operators]{Riesz basis property
of Hill operators with potentials in weighted spaces}

{\author{Plamen Djakov}}\thanks{P. Djakov acknowledges the hospitality
of the Department of Mathematics of the Ohio State University,
July--August 2013.}

\author{Boris Mityagin}\thanks{B. Mityagin acknowledges the support
of the Scientific and Technological Research Council of Turkey and
the hospitality of Sabanci University, May--June, 2013.}

\dedicatory{Dedicated to the memory of Boris Moiseevich Levitan \\on
the occasion of the 100th anniversary of his birthday}

\address{Sabanci University, Orhanli,
34956 Tuzla, Istanbul, Turkey}
 \email{djakov@sabanciuniv.edu}

\address{Department of Mathematics,
The Ohio State University,
 231 West 18th Ave,
Columbus, OH 43210, USA} \email{mityagin.1@osu.edu}

\begin{abstract}
Consider the Hill operator $L(v) = - d^2/dx^2 + v(x) $  on  $[0,\pi]$
with Dirichlet, periodic or antiperiodic boundary conditions; then
for large enough $n$  close
to $n^2 $ there are one Dirichlet eigenvalue $\mu_n$ and two
periodic (if $n$ is even) or antiperiodic (if $n$ is odd) eigenvalues
$\lambda_n^-, \, \lambda_n^+ $ (counted with multiplicity).

 We describe classes of complex potentials
$v(x)= \sum_{2\mathbb{Z}} V(k) e^{ikx}$  in weighted spaces  (defined
in terms of the Fourier coefficients of $v$) such that the periodic
(or antiperiodic) root function system of $L(v) $ contains a Riesz
basis if and only if
$$V(-2n) \asymp V(2n) \quad \text{as} \;\; n \in 2\mathbb{N}\;\;
(\text{or} \; n \in 1+ 2\mathbb{N}), \;\; n \to \infty.$$ For such
potentials we prove that $\lambda_n^+ - \lambda_n^- \sim \pm
2\sqrt{V(-2n)V(2n)} $ and
$$\mu_n - \frac{1}{2}(\lambda_n^+ + \lambda_n^-) \sim -\frac{1}{2}
(V(-2n) + V(2n)).$$

\end{abstract}

\maketitle

{\it Keywords}: Hill operator, periodic and antiperiodic boundary
conditions, Riesz bases.

 {\it MSC:} 47E05, 34L40, 34L10.

\section{Introduction}
The theory of self-adjoint ordinary differential operators (o.d.o.)
is well-developed, and the spectral decompositions play a central
role in it \cite{LS, Na69, Mar}.

Convergence of the spectral decompositions of non-self-adjoint
o.d.o., considered on a finite interval $I$ and subject to strictly
regular boundary conditions (see \cite[\S 4.8]{Na69}), has been
understood completely in the early 1960's \cite{Mi62, Ke64, DS71}. In
this case, we not only have convergence, but the system of
eigenfunctions (SEF) is a Riesz basis in $L^2 (I).$ However, in the
case of regular but not strictly regular boundary conditions -- even
in the case of periodic or antiperiodic  boundary conditions --
complete understanding appeared only in the  2000's as a result of
the interaction of two lines of research.

One stems out from a question raised by A. Shkalikov in 1996/1997 in
Kostyuchenko-Shkalikov seminar on Spectral Analysis at Moscow State
University. He formulated the following assertion and sketched an
approach to its proof.

{\em Consider the Hill operator
$$
Ly=-y''+ q(x)y, \quad   0 \leq x \leq \pi,
$$
with a smooth potential $q$ such that for some $s\geq 0$
$$
q^{(k)} (0) = q^{(k)} (\pi), \quad 0 \leq k \leq s-1,
$$
and
$$
q^{(s)} (0) - q^{(s)} (\pi) \neq 0.
$$
Then the system of normalized periodic (or antiperiodic) root functions
of the operator $L = L(q) $ is a Riesz basis in $L^2 ([0,\pi]).$}

In the framework of the scheme suggested by Shkalikov, this claim
was proved in the case $q \in C^4 ([0,\pi]), \; s=0,$ by Kerimov and
Mamedov \cite{MK}. Further results of Dernek-Veliev \cite{DV05},
Makin \cite{Ma06-1} and Veliev-Shkalikov \cite{VS09}  confirmed the
general case $s \geq 0.$ Moreover,  Makin \cite{Ma06-1}  considered
potentials $q(x) = \sum_{k\in 2\mathbb{Z}} q_k e^{ikx}$ such that
\begin{equation}
\label{m1} q \in W^s_1 ([0,\pi]), \quad  q^{(p)}(0) = q^{(p)}(\pi),
\;\; 0 \leq p \leq s-1,
\end{equation}
and
\begin{equation}
\label{m2} \exists c>0: \quad |q_{\pm 2n}| > c \, n^{-s-1} \quad
 \forall n>>1,
\end{equation}
and proved that \\
{\em the periodic (antiperiodic)
SEF is a Riesz basis if and only if   there are constants $C>c>0$
such that}
\begin{equation}
\label{m3}
  c \, q_{-2n}  \leq q_{2n}  \leq   C \, q_{-2n}, \;\;  \forall \;
  \text{even (odd)} \;  n>>1.
\end{equation}
He used this result to construct examples of potentials
for which the periodic SEF is not a Riesz basis.
Veliev and Shkalikov \cite{VS09} extended the results of Makin
by providing more general conditions for existence of
Riesz bases.

Notice, however, that the above results were obtained for
potentials of finite smoothness, i.e., in the framework of
Sobolev spaces $W_1^m, $ where $m$ is a positive integer.

Another line of research comes from the papers
 \cite{KM99, KM01, DM3, DM4, DM5} which goal was the analysis
 of spectral gaps $\gamma_n = \lambda_n^+ - \lambda_n^-$
and deviations $\delta_n = \mu_n - \frac{1}{2} (\lambda_n^+ +
\lambda_n^-),$  but the analytical methods developed
in these papers
 allowed us to understand the structures responsible for the
 Riesz basis property of SEF in the case of periodic or
 antiperiodic boundary conditions.
Already in \cite{Mit03} it has been announced that the authors have
constructed examples of 1D Dirac operators such that their periodic
or antiperiodic SEF is not a Riesz basis. With all details
these constructions were presented, both for Hill and
1D Dirac operators in \cite[Section~5.2]{DM15},
see in particular Theorem 71 there.

Recently, the same approach has led to general
necessary and sufficient conditions
for existence of Riesz bases consisting of
periodic (or antiperiodic) root functions
\cite{DM25a, DM25, DM28}.

This note  gives  a further development of those results in the
framework of the approach in \cite[Section 5.2]{DM15} (see
Theorems~9 and 10 below).  We work in weighted spaces of potentials,
which allows us to consider potentials of arbitrary smoothness
(including singular potentials or potentials of smoothness beyond
$C^\infty,$ say in the Carlemann-Gevrey  classes).

We describe classes of complex potentials $v(x)= \sum_{2\mathbb{Z}}
V(k) e^{ikx}$  (in weighted spaces defined in terms of the Fourier
coefficients $V(k)$ of $v$) such that the periodic or antiperiodic
root function system of the Hill operator $L(v) $ contains a Riesz
basis if and only if
$$V(-2n) \asymp V(2n) \quad \text{as} \;\; n \in 2\mathbb{N}\;\;
(\text{or} \; n \in 1+ 2\mathbb{N}), \;\; n \to \infty.$$ For such
potentials we prove that $\lambda_n^+ - \lambda_n^- \sim \pm
2\sqrt{V(-2n)V(2n)}  $ and $$\mu_n - \frac{1}{2}(\lambda_n^+ +
\lambda_n^-) \sim -\frac{1}{2} (V(-2n) + V(2n)).$$

Moreover, we give several examples (Section 5) to illustrate our
main statements, where we overcome additional difficulties when
verifying the general conditions (on asymptotics of crucial
sequences $\beta_n^\pm $  -- see Propositions~\ref{ex1}, \ref{ex2}
and other claims in Section 5).

\section{Preliminaries}

Let $L= L(v)$ be  the Hill operator
\begin{equation}
\label{i1} L y=-y^{\prime \prime}+v(x)y,
\end{equation}
with a complex valued potential $v \in L^2 ([0,\pi]$ or more
generally, with a singular complex valued potential $v \in
H^{-1}_{per} (\mathbb{R}).$

For potentials $v \in L^2 ([0,\pi]$ we consider $L(v) $   on the
interval $[0,\pi]) $ with Dirichlet $(Dir),$  periodic  $(Per^+)$
and antiperiodic  $(Per^-)$ boundary conditions $(bc):$
\begin{eqnarray}
\label{i3}  Dir: \quad  y(0) = 0, \quad y(\pi) = 0, \\
\label{i4} Per^\pm: \quad   y(\pi) = \pm y(0), \quad y^\prime (\pi) =
\pm y^\prime (0).
\end{eqnarray}
Singular $\pi$-periodic potentials  $v \in H^{-1}_{per} (\mathbb{R})
$ have the form $v= C + Q^\prime,$ where $C$ is a constant and $Q$
is a $\pi$-periodic function such that $Q \in L^2_{loc}
(\mathbb{R}).$  Since adding a constant results in a shift of the
spectra, we may consider without loss of generality only
$\pi$-periodic potentials of the form
\begin{equation}
\label{i01}
v(x) = Q^\prime  (x), \quad
 Q \in L^2_{loc}(\mathbb{R}), \;\; Q(x + \pi) = Q(x).
\end{equation}
Let us notice that if $v$ is a $\pi$-periodic function with $v \in
L^1_{loc} (\mathbb{R}),$ then it has the form \eqref{i01} if, and
only if,
\begin{equation}
\label{i00} \int_0^\pi v(t) dt = 0
\end{equation}
because the latter condition implies that $Q(x)= \int_0^x v(t) dt$ is
a $\pi$-periodic function.

In the case of potentials $v \in H^{-1}_{per} (\mathbb{R})$ the
classical periodic and antiperiodic boundary conditions \eqref{i4}
are replaced by
\begin{equation}
\label{i04}
 y(\pi) = \pm y(0), \quad y^{[1]} (\pi) =
\pm y^{[1]} (0),
\end{equation}
where
$$
y^{[1]}(x) := y^\prime (x) - Q(x) y(x)
$$
is the {\em quasi-derivative} of $y.$ We refer to \cite{SS03, HM01,
DM16,DM21} for basics and details about Hill-Schr\"odingier
operators with singular potentials of the form (\ref{i01}). The
Fourier method for such operators is developed in \cite{DM16}. We
recall that the Fourier coefficients of $v$ with respect to the
orthonormal system $\left (e^{ikx}\right )_{k\in 2Z}$ are defined by
\begin{equation}
\label{i05} V(k) =ik q(k), \quad \text{where} \;\;  q(k) =
\frac{1}{\pi} \int_0^\pi Q(x) e^{ikx} dx,  \;\; k \in 2Z.
\end{equation}

It is known (see \cite{Mar, DM15}  for $L^2$-potentials, or
\cite{DM16, DM21} for $H^{-1}_{per} $-potentials)  that the
following holds.
\begin{Lemma}
\label{loc} Let $v$ be a potential of the form \eqref{i01}. Then the
periodic, antiperiodic and Dirichlet spectra of the operator $L(v)$
are discrete. Moreover, there is an integer $N_* = N_* (v)$  such
that for each $n>N_* $ the disc
\begin{equation}
\label{i06} D_n = \{\lambda \in \mathbb{C}:  \; |\lambda - n^2 | <
n/4\}
\end{equation}
contains one simple Dirichlet eigenvalue and two periodic (if $n$ is
even) or antiperiodic (if $n$ is odd) eigenvalues $\lambda_n^-, \,
\lambda_n^+ $ (counted with multiplicity). There are at most
finitely many periodic, antiperiodic and Dirichlet eigenvalues
outside the union  $\bigcup_{n\geq N_*} D_n,$  and that eigenvalues
are situated in the half-plane $Re \, z < (N_* +1/2)^2.$
\end{Lemma}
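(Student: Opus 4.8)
The plan is to prove Lemma~\ref{loc} by combining a qualitative statement---discreteness of the three spectra---with a quantitative localization obtained from the Fourier method of \cite{DM16}. For discreteness, I would first recall that for a potential of the form \eqref{i01} the operator $L(v)$ with any of the boundary conditions \eqref{i3}, \eqref{i04} has compact resolvent: one writes $L(v) = L_0 + v$ as a form perturbation of the free operator $L_0 = -d^2/dx^2$ with the corresponding boundary conditions, checks that $v = Q'$ with $Q \in L^2_{loc}$ is form-bounded relative to $L_0$ with relative bound zero (this is exactly the $H^{-1}$ setting of \cite{SS03, DM16, DM21}), and concludes that $(L(v) - \lambda)^{-1}$ exists for $\lambda$ far enough to the left and is compact; hence each spectrum is a discrete sequence of eigenvalues of finite algebraic multiplicity. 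This already gives the ``half-plane'' part of the statement once we pin down the value $(N_*+1/2)^2$ below.

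The heart of the argument is the localization of eigenvalues into the discs $D_n$. Here I would use the standard reduction: $\lambda \in D_n$ is a periodic/antiperiodic (resp.\ Dirichlet) eigenvalue if and only if a certain $2\times 2$ determinant built from the Fourier coefficients $V(\pm 2k)$ vanishes, and the Lyapunov--Schmidt / Fourier-block analysis of \cite[\S2]{DM16} (see also \cite{DM15} in the $L^2$ case) shows that on the circle $\partial D_n = \{|\lambda - n^2| = n/4\}$ the resolvent of $L(v)$ differs from that of $L_0$ by an operator of norm $o(1)$ as $n \to \infty$. Concretely, one estimates the off-diagonal Fourier blocks using $\sum_{k \neq n} |V(2k)|^2/|n^2 - (2k)^2| \to 0$, which holds because $v \in H^{-1}_{per}$ means $(V(2k)/k) \in \ell^2$. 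A Rouch\'e-type argument on $\partial D_n$ then shows that inside $D_n$ the operator $L(v)$ has exactly the same number of eigenvalues (counted with algebraic multiplicity) as $L_0$, namely $3$ in total for each large $n$: the single Dirichlet eigenvalue $n^2$ of $L_0$, and the double periodic eigenvalue $n^2$ (for $n$ even) or double antiperiodic eigenvalue $n^2$ (for $n$ odd). The passage from ``$o(1)$ on each circle'' to ``uniform for all $n > N_*$'' requires one quantitative step: choosing $N_*$ so large that the tail sums above are $< 1/2$ (say) for all $n > N_*$, simultaneously controlling that the finitely many remaining eigenvalues all lie in $Re\, z < (N_*+1/2)^2$; the latter follows from the resolvent bound on the vertical line $Re\, \lambda = (N_*+1/2)^2$.

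The simplicity of the Dirichlet eigenvalue $\mu_n$ for $n > N_*$ is a separate small point: since $L_0$ with Dirichlet conditions has simple spectrum $\{n^2\}$, and the disc $D_n$ contains exactly one Dirichlet eigenvalue of $L(v)$ with its algebraic multiplicity, that multiplicity must be $1$. One must be slightly careful that this counts \emph{algebraic} multiplicity; the point is that the Riesz projection associated to $D_n \cap \sigma_{Dir}(L(v))$ has rank $1$ by the Rouch\'e count, so the eigenvalue is geometrically and algebraically simple. The main obstacle I anticipate is purely bookkeeping: making the Fourier-block estimates uniform across the three boundary conditions at once and extracting a single integer $N_*(v)$ that works for all of them, together with the explicit half-plane bound; the analytic content (resolvent comparison, Rouch\'e) is entirely standard and is already carried out in the references \cite{Mar, DM15, DM16, DM21}, so in the paper I would simply cite those and state that the same estimates apply verbatim in the weighted-space setting considered here.
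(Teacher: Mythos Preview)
The paper does not actually prove Lemma~\ref{loc}; it introduces the lemma with the sentence ``It is known (see \cite{Mar, DM15} for $L^2$-potentials, or \cite{DM16, DM21} for $H^{-1}_{per}$-potentials) that the following holds'' and gives no further argument. Your proposal is therefore not being compared against a proof in the paper but against a bare citation.

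That said, the sketch you give is an accurate outline of how the result is established in those references: compact resolvent via form-boundedness of $v=Q'$ relative to $L_0$, Fourier-block resolvent comparison on the circles $\partial D_n$, and a Rouch\'e/Riesz-projection counting argument to match the eigenvalue multiplicities of $L(v)$ with those of $L_0$ inside each $D_n$. Your closing remark---that in the paper one would simply cite \cite{Mar, DM15, DM16, DM21}---is exactly what the authors do. One small wording issue: when you write ``$3$ in total,'' be careful that the Dirichlet eigenvalue and the periodic/antiperiodic pair are eigenvalues of \emph{different} operators, so the Rouch\'e count is performed separately for each boundary condition rather than as a single count of three; your subsequent sentences make this clear, but the phrase itself is slightly misleading.
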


The smoothness of potentials $v$ can be characterized in terms of
decay rate of the {\em spectral gaps} $\gamma_n = \lambda_n^+  -
\lambda_n^-$ and {\em deviations} $\delta_n = \mu_n - \lambda_n^+$
(see \cite{DM15} and the bibliography therein for Hill operators with
$L^2$-potentials, and \cite{DM21} for Hill operators with singular
potentials). The proofs of these results use essentially the
following statement (see \cite[Section~2.2]{DM15} for Hill operators
with $L^2$-potentials and \cite[Lemma 6]{DM21} for Hill-Schr\"odinger
operators with $H^{-1}_{per}$-potentials).

\begin{Lemma}
\label{basic} There are functionals $\alpha_n (v;z) $ and $
\beta^\pm_n (v;z) $ defined for large enough $n \in \mathbb{N}$ and $
|z| < n $ such that $\lambda = n^2 + z $  is a periodic (for even
$n$) or antiperiodic (for odd $n$) eigenvalue of $L$ if and only if
$z$ is an eigenvalue of the matrix
\begin{equation}
\label{p1}  \left [
\begin{array}{cc} \alpha_n (v;z)  & \beta^-_n (v;z)
\\ \beta^+_n (v;z) &  \alpha_n (v;z) \end{array}
\right ].
\end{equation}
Moreover, $\alpha_n (z;v) $ and $\beta^\pm_n (z;v)$ depend
analytically on $v$ and $z,$  and $z_n^\pm =\lambda_n^\pm -n^2$
 are the only solutions of the basic equation
\begin{equation}
\label{p2}   (z-\alpha_n (v;z))^2=  \beta^-_n (v;z)\beta^+_n (v;z)
\end{equation}
in the disc $ |z| < n/4. $
\end{Lemma}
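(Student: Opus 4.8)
The plan is a Lyapunov--Schmidt reduction of the eigenvalue equation $L(v)y=(n^2+z)y$ onto the two-dimensional ``resonant'' subspace spanned by $e^{\pm inx}$. Expanding $y=\sum_k y(k)e^{ikx}$ and recalling that a periodic (for even $n$) or antiperiodic (for odd $n$) eigenfunction is supported on the progression $\Gamma_n=n+2\mathbb Z$ (which is $2\mathbb Z$ for even $n$ and $1+2\mathbb Z$ for odd $n$), the equation becomes the infinite linear system
\begin{equation*}
(k^2-n^2-z)\,y(k)=-\sum_{m\in\Gamma_n}V(k-m)\,y(m),\qquad k\in\Gamma_n
\end{equation*}
(for singular $v=Q'$ one uses instead the quasi-derivative boundary conditions \eqref{i04} together with \eqref{i05}; the system has the same shape, the factor $ik$ being absorbed into $V$). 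Split $\Gamma_n=\{n,-n\}\cup\Gamma_n'$ with $\Gamma_n'=\Gamma_n\setminus\{n,-n\}$, let $P_n$ be the coordinate projection onto $\mathrm{span}\{e^{inx},e^{-inx}\}$ and $Q_n=I-P_n$. For $k\in\Gamma_n'$ and $|z|<n$ one has $|k^2-n^2-z|\ge|k^2-n^2|-n\ge 4(n-1)-n>0$, so the free operator $L^0-n^2-z$ is boundedly invertible on $\mathrm{ran}\,Q_n$; write $R=R_n(z)$ for that inverse.

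The first and only substantial step is to solve the $Q_n$-part of the system for $Q_ny$ in terms of $P_ny$. The $Q_n$-equation is $(I+Rv)\,Q_ny=-Rv\,P_ny$, and the key estimate is that $\|Rv\|<\tfrac12$ on $\mathrm{ran}\,Q_n$ for all $n\ge N_*$ and all $|z|<n$; this follows from the decay of the Fourier coefficients of $v$ (a Schur/Hilbert--Schmidt bound for $v\in L^2$, or its $H^{-1}_{per}$-analogue using $q(k)=V(k)/(ik)\in\ell^2$ and the extra decay supplied by the quasi-derivative), and this is where ``$n$ large'' enters. Hence $I+Rv$ is invertible and $Q_ny=T_n(v,z)\,P_ny$ with $T_n=-(I+Rv)^{-1}Rv$, given by a geometrically convergent Neumann series in the operator $Rv$. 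Each term of the series is a finite product of Fourier coefficients $V(\cdot)$ times a rational factor in $z$ whose only poles are at the points $k_i^2-n^2$, all of modulus $\ge4(n-1)$; consequently the functionals constructed below are analytic in $z$ on $|z|<n$ and are sums of everywhere-convergent power series in the coordinates $V(k)$ of $v$, hence analytic in $v$.

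Substituting $Q_ny=T_nP_ny$ into the two equations with $k=\pm n$ turns the infinite system into a $2\times2$ system $z\,(y(n),y(-n))^{\mathsf T}=M_n(v,z)(y(n),y(-n))^{\mathsf T}$, whose entries are the matrix elements between $e^{inx}$ and $e^{-inx}$ of the effective Hamiltonian $P_nvP_n+P_nvRvP_n+P_nvRvRvP_n+\cdots$. Thus the $(n,n)$-entry equals $V(0)$ plus a sum over ``chains'' $n\to k_1\to\cdots\to k_{j-1}\to n$ with all $k_i\in\Gamma_n'$, each chain contributing $\pm\,V(n-k_1)V(k_1-k_2)\cdots V(k_{j-1}-n)\big/\prod_i(k_i^2-n^2-z)$; the $(-n,-n)$-entry is the same sum with $n$ replaced by $-n$. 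Reindexing every chain by $k_i\mapsto -k_i$ (which leaves $\Gamma_n'$ and each denominator $k_i^2-n^2-z$ invariant) and then reversing the order of the chain matches the two sums term by term, so the diagonal entries coincide; call their common value $\alpha_n(v;z)$, and denote the off-diagonal entries by $\beta_n^\mp(v;z)$ (equal to $V(\pm2n)$ up to lower-order corrections). This is the matrix \eqref{p1}; and since $\lambda=n^2+z$ with $|z|<n$ is a periodic/antiperiodic eigenvalue exactly when this homogeneous $2\times2$ system has a nontrivial solution, i.e. when $\det(M_n(v,z)-zI)=0$, this is precisely the basic equation $(z-\alpha_n)^2=\beta_n^-\beta_n^+$. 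Finally, that $z_n^\pm$ are its only roots in $|z|<n/4$ follows by combining this equivalence with Lemma~\ref{loc}: $D_n$ contains exactly two such eigenvalues counted with multiplicity, while for $n>N_*$ the annulus $n/4\le|z|<n$ contains none (the neighbouring discs $D_{n\pm2}$ do not meet $|z|<n$, and $n^2-n>(N_*+1/2)^2$), so the basic equation has no roots there.

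The main obstacle is precisely the uniform contraction bound $\|R_n(z)v\|<\tfrac12$ on $\mathrm{ran}\,Q_n$, holding for every $z$ in the full disc $|z|<n$ and every $n\ge N_*$: one must estimate the composition of the (for singular $v$, merely relatively bounded) perturbation with the free resolvent restricted to the complementary subspace, with constants independent of $z$ and of $n$. For $v\in L^2$ this is an elementary $\ell^2$-computation; for $v\in H^{-1}_{per}$ it requires the quasi-derivative setup of \eqref{i04}--\eqref{i05} and is the content of \cite[\S2.2]{DM15} and \cite[Lemma~6]{DM21}, which we may simply cite. Granted that estimate, the remaining ingredients --- the Fourier rewriting, the algebraic identification of the effective Hamiltonian, the $k\mapsto-k$ symmetry for the diagonal, and the reduction to the basic equation --- are routine.
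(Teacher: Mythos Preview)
Your proof is correct and follows essentially the same route as the paper's references. Note that the paper itself does not prove Lemma~\ref{basic}: it quotes the result and the explicit formulas \eqref{22.9}--\eqref{22.15} from \cite[\S2.2]{DM15} (for $L^2$-potentials) and \cite[Lemma~6]{DM21} (for $H^{-1}_{per}$-potentials). The argument in those references is precisely the Lyapunov--Schmidt reduction you outline: project onto the two-dimensional span of $e^{\pm inx}$, invert the free operator on the complement via a Neumann series in $R_n(z)v$ (valid once the contraction bound holds for $n$ large), and read off the $2\times2$ effective matrix. Your symmetry argument $j_i\mapsto -j_i$ together with chain reversal is exactly how the equality of the diagonal entries is established there, yielding the series \eqref{22.11} for $\alpha_n$, and your identification of the off-diagonal entries matches \eqref{22.14}--\eqref{22.15}. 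The final count of roots in $|z|<n/4$ via Lemma~\ref{loc} is also how the references proceed. In short, there is nothing to contrast: you have reconstructed the cited proof.
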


The functionals $\alpha_n (v;z) $ and $\beta^\pm_n (v;z)$ are well
defined for large enough $n$ by the following expressions in terms
of the Fourier coefficients of the potential (see (2.16)--(2.33) in
\cite{DM15} for Hill operators with $L^2$-potentials and
(3.23)--(3.30) in \cite{DM21} for Hill operators with
$H^{-1}_{per}$-potentials).

\begin{equation}
\label{22.9} \alpha_k  =  \sum_{k=1}^\infty  S^{11}_k, \quad
\beta_n^- = V(-2n) + \sum_{k=1}^\infty  S^{12}_k, \quad
\beta_n^+ =V(2n) + \sum_{k=1}^\infty  S^{21}_k,
\end{equation}
where for $k=1,2, \dots$
\begin{equation}
\label{22.11} S^{11}_k =
 \sum_{j_1, \ldots, j_k \neq \pm n}
\frac{V(-n-j_1)V(j_1 - j_2) \cdots V(j_{k-1} -j_k) V(j_k +n)}
{(n^2 -j_1^2 +z) \cdots (n^2 - j_k^2 +z)},
\end{equation}
and
\begin{equation}
\label{22.14} S^{12}_k =  \sum_{j_1, \ldots, j_k \neq \pm n}
\frac{V(-n-j_1)V(j_1 - j_2) \cdots V(j_{k-1} -j_k) V(j_k -n)}
{(n^2 -j_1^2 +z) \cdots (n^2 - j_k^2 +z)},
\end{equation}
\begin{equation}
\label{22.15} S^{21}_k = \sum_{j_1, \ldots, j_k \neq \pm n}
\frac{V(n-j_1)V(j_1 - j_2) \cdots V(j_{k-1} -j_k) V(j_k +n)} {(n^2
-j_1^2 +z) \cdots (n^2 - j_k^2 +z)}.
\end{equation}
In the sequel, we suppress the dependence on $v$ in the notations
and write only $\beta^{\pm}_n (z), \alpha_n (z).$

\begin{Lemma}
\label{lemrho} If $v$ is a singular potential of the form
\eqref{i01}, and  $\lambda_n^\pm $ are the corresponding periodic or
antiperiodic eigenvalues in the disc $D_n, $ then
\begin{equation}
\label{i08} |\lambda_n^\pm -n^2| =  o (n), \quad n \to \infty.
\end{equation}
\end{Lemma}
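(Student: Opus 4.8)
The plan is to read the size of $z_n^{\pm}:=\lambda_n^{\pm}-n^2$ directly off the basic equation of Lemma~\ref{basic}. For $n>N_*$ the two eigenvalues of $L(v)$ in $D_n$ are $n^2+z_n^{\pm}$, where $z_n^{\pm}$ are the solutions of \eqref{p2} in $\{|z|<n/4\}$; taking moduli in \eqref{p2} and using $\sqrt{ab}\le\tfrac12(a+b)$ for $a,b\ge 0$ gives
\[
|z_n^{\pm}|\;\le\;|\alpha_n(z_n^{\pm})|+\sqrt{|\beta_n^-(z_n^{\pm})|\,|\beta_n^+(z_n^{\pm})|}\;\le\;|\alpha_n(z_n^{\pm})|+\tfrac12\bigl(|\beta_n^-(z_n^{\pm})|+|\beta_n^+(z_n^{\pm})|\bigr).
\]
So it is enough to show that, as $n\to\infty$,
\[
\sup_{|z|<n/4}|\alpha_n(z)|\to 0,\qquad \sup_{|z|<n/4}|\beta_n^{\pm}(z)|=o(n).
\]

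For the second step I would separate the leading term in \eqref{22.9}: $\alpha_n(z)=\sum_{k\ge1}S^{11}_k(z)$, $\beta_n^-(z)=V(-2n)+\sum_{k\ge1}S^{12}_k(z)$, $\beta_n^+(z)=V(2n)+\sum_{k\ge1}S^{21}_k(z)$. By \eqref{i05}, $|V(\pm 2n)|=2n\,|q(\pm 2n)|=o(n)$, simply because $(q(2k))_{k\in\mathbb Z}\in\ell^2$, hence $q(\pm2n)\to0$; this is the only place where the singular nature of $v$ enters, and it is also sharp, since $q(2k)=c/k$ is admissible and then $|V(2n)|$ does not tend to $0$. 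It therefore remains to prove that each of the three series $\sum_{k\ge1}S^{11}_k(z)$, $\sum_{k\ge1}S^{12}_k(z)$, $\sum_{k\ge1}S^{21}_k(z)$ tends to $0$ uniformly for $|z|<n/4$. This is exactly the kind of estimate on which the very construction of $\alpha_n,\beta_n^{\pm}$ and the spectral-gap analysis of \cite[Section~2.2]{DM15} (for $v\in L^2$) and \cite[Lemma~6, (3.23)--(3.30)]{DM21} (for $v\in H^{-1}_{per}$) rest, so I would simply invoke it. A direct argument is also short: since $v$ is supported on $2\mathbb Z$, all indices $j_1,\dots,j_k$ in \eqref{22.11}--\eqref{22.15} have the parity of $n$ and are $\ne\pm n$, so $|n^2-j_i^2|\ge 4n-4$ and hence $|n^2-j_i^2+z|\ge\tfrac34|n^2-j_i^2|$ once $|z|<n/4$ and $n$ is large; inserting this into the multilinear sums and iterating in the standard way bounds $|S^{\bullet}_k(z)|$ by $C\theta^{\,k}\mathcal E_n$ with $\theta<1$ and a sequence $\mathcal E_n=\mathcal E_n(v)\to0$, uniformly in $|z|<n/4$, and summation over $k$ finishes this.

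Putting the two steps together, $|z_n^{\pm}|\le o(1)+\tfrac12\bigl(|V(-2n)|+|V(2n)|+o(1)\bigr)=o(n)$, which is \eqref{i08}. The genuinely non-trivial ingredient is the uniform-in-$z$ vanishing of the series $\sum_kS^{\bullet}_k(z)$; for Hill operators on $[0,\pi]$ this is, however, comparatively painless because the parity constraint forces the separation $|n^2-j^2|\gtrsim n$ of the denominators, and in any event the required bounds are already available in \cite{DM15,DM21}.
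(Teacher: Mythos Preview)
Your strategy is exactly the paper's: feed $z_n^\pm$ into the basic equation \eqref{p2} and control $\alpha_n,\beta_n^\pm$ via the estimates from \cite{DM21}. One point needs correction, however. You assert that $\sup_{|z|<n/4}|\alpha_n(z)|\to0$ and that each series $\sum_{k\ge1}S^{\bullet}_k(z)$ tends to $0$ uniformly, invoking \cite{DM21}. What \cite[(4.32)]{DM21} (quoted verbatim in the paper's proof) actually provides for $H^{-1}_{per}$-potentials is only
\[
|\alpha_n(z)|\le n\varepsilon_n,\qquad |\beta_n^\pm(z)|\le n\varepsilon_n+|V(\pm2n)|\qquad(|z|\le n/2),
\]
with $\varepsilon_n\to0$ as in \eqref{r1}; i.e.\ $o(n)$, not $o(1)$. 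Your sketched ``direct argument'' does not close this gap in the singular case: since $|V(k)|=|k|\,|q(k)|$, the numerators in \eqref{22.11}--\eqref{22.15} carry factors comparable to the denominators, and the $L^2$-style iteration you allude to does not straightforwardly produce $o(1)$.

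This does not affect the conclusion. Divide your displayed inequality by $n$ and use $|\alpha_n|/n\le\varepsilon_n\to0$ and $|\beta_n^\pm|/n\le\varepsilon_n+2|q(\pm2n)|\to0$ to obtain $|z_n^\pm|/n\to0$; this is precisely how the paper proceeds (keeping the square root rather than passing through AM--GM, a cosmetic difference).
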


\begin{proof}
In view of  \cite[(4.32)]{DM21},
$$
|\alpha_n (z)| \leq n \varepsilon_n, \quad |\beta_n^\pm (z)| \leq n
\varepsilon_n + |V(\pm 2n)| \quad \text{for} \;\; |z| \leq n/2,
$$
where $V(k)$ are given by (\ref{i05}) and
\begin{equation}
\label{r1} \varepsilon_n: = C_1 \left (\sum_{|k|\geq \sqrt{n}}
|q(k)|^2 \right )^{1/2} + \frac{C_2}{\sqrt{n}}
\end{equation}
with some constants  $C_1, C_2.  $  Therefore, $\varepsilon_n \to 0$
as $n\to \infty. $ On the other hand, by (\ref{i05}) we have $V(\pm
2n) = \pm 2in q(\pm 2n)$ with $q(\pm 2n) \to 0.$

Since $z_n^\pm = \lambda_n^\pm -n^2 $  are roots of (\ref{p2}), it
follows that
\begin{equation}
\label{r2} |z_n^\pm|/n  \leq   \varepsilon_n + \sqrt{(\varepsilon_n
+ 2|q(-2n)|)(\varepsilon_n + 2|q(2n)|)} \to 0.
\end{equation}
Thus, (\ref{i08}) holds.
\end{proof}

{\em Remark. The estimate in \eqref{i08} could be improved if $v(x) =
Q^\prime (x) $ with  $Q \in H^\alpha, \; 0< \alpha <1. $ Then one can
show that}
\begin{equation}
\label{i09} |\lambda_n^\pm -n^2| =  o (n^{1-\alpha}), \quad n \to
\infty.
\end{equation}
\bigskip

The asymptotic behavior of $\beta^{\pm}_n (z)$ (or $\gamma_n$ and
$\delta_n$) plays also a crucial role in studying the Riesz
basis property of the system of root functions of the operators
$L_{Per^\pm}.$ In \cite[Section 5.2]{DM15}, it is shown (for
potentials $v \in L^2 ([0,\pi])$) that if the ratio $\beta^+_n
(z_n^*)/\beta^-_n (z_n^*) $ is not separated from $0$ or $\infty$
then the system of root functions of $L_{Per^\pm}$ does not contain
a Riesz basis (see Theorem 71 and its proof therein). Theorem 1 in
\cite{DM25} (or Theorem 2 in \cite{DM25a}) gives, for wide classes
of $L^2$-potentials,  the following criterion for Riesz basis
property.

\begin{Criterion}
\label{crit1}  Consider the Hill operator with  $v \in L^2
([0,\pi]). $ If
\begin{equation}
\label{a1} \beta_n^+ (0) \neq 0, \quad \beta_n^- (0)\neq 0
\end{equation}
and
\begin{equation}
\label{a2} \exists \, c  \geq 1 : \quad  c^{-1}|\beta_n^\pm (0)|
\leq |\beta_n^\pm (z)| \leq c \, |\beta_n^\pm (0)|, \quad  |z| \leq
1,
\end{equation}
for all sufficiently large even $n$ (if $bc= Per^+$) or odd $n$ (if
$bc= Per^-$), then

(a) there is $N=N(v) $ such that for $n>N$ the operator
$L_{Per^\pm}(v)$ has exactly two simple periodic (for even $n$) or
antiperiodic (for odd $n$) eigenvalues in the disc $\{z: |z- n^2|<1
\} ;$

(b)  the system of root functions of $L_{Per^+} (v) $ or $L_{Per^-}
(v) $ contains a Riesz basis in $L^2 ([0,\pi])$ if and only if,
respectively,
\begin{equation}
\label{a3} \limsup_{n\in 2\mathbb{N}} t_n (0) <\infty \quad
\text{or}
 \quad \limsup_{n\in 1+2\mathbb{N}} t_n (0)<\infty,
\end{equation}
where
\begin{equation}
\label{a4} t_n (z)=\max \{|\beta_n^-(z)|/|\beta_n^+(z)|, \,
|\beta_n^+(z)|/|\beta_n^-(z)|\}.
\end{equation}
\end{Criterion}

In general form, i.e., without the restrictions (\ref{a1}) and
(\ref{a2}), this criterion is given in \cite{DM26-2} in the context
of 1D Dirac operators but in the case of Hill operators the
formulation and the proof are the same (see Proposition~19 in
\cite{DM28}). Moreover, the same argument gives the following more
general statement.

\begin{Criterion}
\label{crit2} Let $\Gamma^+ =2\mathbb{N}, $ $\Gamma^-=2\mathbb{N}-1 $
in the case of Hill operators, and $\Gamma^+ =2\mathbb{Z}, $
$\Gamma^-=2\mathbb{Z}-1 $ in the case of one dimensional Dirac
operators. There exists $N_* = N_* (v)$ such that for $|n|>N_*$ the
operator $L=L_{Per^\pm}(v)$ has in the disc $D_n =\{z: |z- n^2|<n/4
\}$  (respectively $D_n =\{z: |z- n|<1/2 \}$) exactly two periodic
(for $n\in \Gamma^+$) or antiperiodic (for $n\in \Gamma^-$)
eigenvalues, counted with algebraic multiplicity. Let
    $$\mathcal{M}^\pm =\{n\in \Gamma^\pm: \; |n|
\geq N_*, \; \lambda^-_n \neq \lambda^+_n \},$$ and  let $\{u_{2n-1},
\,u_{2n} \}$ be a pair of normalized eigenfunctions associated, respectively,  with
the eigenvalues $\lambda_n^- $ and $ \lambda_n^+,$
$\; n \in \mathcal{M}^\pm.$

(a) If $\Delta \subset \Gamma^\pm, $ then the system $\{u_{2n-1},
\,u_{2n}, \; n\in \Delta \cap \mathcal{M}^\pm\} $ is a (Riesz) basis
in its closed linear span if and only if
\begin{equation}
\label{cr11} \limsup_{n\in \Delta \cap \mathcal{M}^\pm} t_n (z_n^*) <
\infty,
\end{equation}
where $z_n^* = \frac{1}{2} (\lambda^-_n + \lambda^+_n) -\lambda^0_n $
with $\lambda^0_n = n^2 $ for Hill operators and $\lambda^0_n = n $
for Dirac operators.

(b) The system of root functions of $L$ contains a Riesz basis if and
only if (\ref{cr11}) holds for $\Delta =\Gamma^\pm.$
\end{Criterion}

Another interesting abstract criterion of basisness is the following.

\begin{Criterion}
\label{crit3} The system of root functions of the operator
$L_{Per^\pm} (v)$ contains a Riesz basis in $L^2([0,\pi])$ if only
if
\begin{equation}
\label{a6} \limsup_{n \in \mathcal{M}^\pm} \frac{|\lambda_n^+-
\mu_n|}{|\lambda_n^+ - \lambda_n^-|} <\infty.
\end{equation}
\end{Criterion}

This criterion was given (with completely different proofs) in
\cite{GT12} for Hill operators with $L^2$-potentials and in
\cite{DM28} for Hill operators with $H^{-1}_{per}$-potentials and for
one-dimensional Dirac operators with $L^2$-potentials as well.

Recently we have obtained in \cite{DM31}  asymptotic formulas for
spectral gaps $\gamma_n $ and deviations $\delta_n = \mu_n -
\lambda_n^+ $  under the assumptions \eqref{a1} and \eqref{a2}. The
following holds.
\begin{Proposition}
\label{thm51} Assume that there is an infinite set $\Delta \subset \mathbb{N}$
such that
(\ref{a1}) and (\ref{a2}) hold.
Then there exist
branches  $\sqrt{\beta_n^- (z)}$  and
$\sqrt{\beta_n^+ (z)}$  such that
\begin{equation}
\label{50.3} \gamma_n \sim 2\sqrt{\beta_n^- (z_n^*)} \sqrt{\beta_n^+
(z_n^*)},  \quad n\in \Delta
\end{equation}
Moreover

(a) If $\;-1$ is not a cluster point of the sequence $\left (
\sqrt{\beta^-_n (z^*_n)} / \sqrt{\beta^+_n (z^*_n)}\right)_{n\in
\Delta},$ then
\begin{equation}
\label{50.4} \mu_n - \lambda_n^+ \sim  -\frac{1}{2} \left
(\sqrt{\beta^+_n (z_n^*)} + \sqrt{\beta^-_n (z_n^*)} \right )^2,
\quad n \in \Delta,
\end{equation}

(b) If $\,1$ is not a cluster point of the sequence $\left (
\sqrt{\beta^-_n (z^*_n)}/\sqrt{\beta^+_n (z^*_n)}\right)_{n\in
\Delta},$ then
\begin{equation}
\label{50.8} \mu_n - \lambda_n^- \sim  -\frac{1}{2} \left
(\sqrt{\beta^+_n (z_n^*)} - \sqrt{\beta^-_n (z_n^*)} \right )^2,
\quad  n \in \Delta.
\end{equation}

(c) If $-1$ is not a cluster point of the sequence $\left (\beta^-_n
(z^*_n)/\beta^+_n (z^*_n)\right )_{n \in \Delta},$ then in the Hill
case
\begin{equation}
\label{500.4} \mu_n - \frac{1}{2} \left (\lambda_n^- + \lambda_n^+
\right ) \sim -\frac{1}{2} \left ( \beta^+_n (z_n^*) + \beta^-_n
(z_n^*)\right ), \quad n \in \Delta,
\end{equation}
\end{Proposition}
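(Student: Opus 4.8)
The plan is to reduce everything to the basic equation~\eqref{p2} and to its Dirichlet analogue, and then to run a short fixed--point/slow--variation argument followed by elementary algebra with the square roots of $\beta_n^\pm$. \emph{Setup and equations.} Fix $n\in\Delta$ large. By \eqref{a1}--\eqref{a2} and the proof of Criterion~\ref{crit1}(a) both $z_n^\pm:=\lambda_n^\pm-n^2$ lie in $\{|z|<1\}$, where $\beta_n^\pm\neq0$; choose analytic branches $\sqrt{\beta_n^\pm(z)}$ on this (simply connected) disc, set $\sqrt{\beta_n^-(z)\beta_n^+(z)}:=\sqrt{\beta_n^-(z)}\,\sqrt{\beta_n^+(z)}$, and relabel $\lambda_n^\pm$ if necessary so that Lemma~\ref{basic} reads
\[
z_n^\pm=\alpha_n(z_n^\pm)\pm\sqrt{\beta_n^-(z_n^\pm)\beta_n^+(z_n^\pm)},\qquad z_n^*=\tfrac12\bigl(z_n^++z_n^-\bigr).
\]
We also use the scalar reduction of the Dirichlet problem near $n^2$ (the analogue of Lemma~\ref{basic}; see \cite{DM15} for $v\in L^2$ and \cite{DM21} for $v\in H^{-1}_{per}$): $\mu_n=n^2+\zeta_n$, where $\zeta_n$ is the only solution in $\{|z|<n/4\}$ of an equation whose leading part is
\[
z-\alpha_n(z)+\tfrac12\bigl(\beta_n^+(z)+\beta_n^-(z)\bigr)=\rho_n(z),
\]
the remainder satisfying $\rho_n(z)=o\bigl(|\beta_n^+(z)|+|\beta_n^-(z)|\bigr)$ uniformly on $\{|z|<1\}$ under \eqref{a1}--\eqref{a2}. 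Finally, by Cauchy's inequalities applied --- on the larger disc $\{|z|\le n/2\}$ --- to the bounds used in the proof of Lemma~\ref{lemrho} (cf.\ \cite[(4.32)]{DM21}), together with \eqref{a2}, one has uniformly on $\{|z|\le1\}$ the slow--variation estimates $\alpha_n(z)=o(1)$, $\alpha_n'(z)=o(1)$, $\beta_n^\pm(z)=\beta_n^\pm(0)(1+o(1))$; in particular $z_n^\pm,z_n^*,\zeta_n=o(1)$ and $\beta_n^\pm$ takes the same value, up to a factor $1+o(1)$, at each of the points $z_n^\pm,z_n^*,\zeta_n$. Write $\tau_n:=\sqrt{\beta_n^-(z_n^*)\beta_n^+(z_n^*)}$.

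\emph{The gap.} Subtracting the two identities of the first display,
\[
\gamma_n=z_n^+-z_n^-=\bigl(\alpha_n(z_n^+)-\alpha_n(z_n^-)\bigr)+\sqrt{\beta_n^-(z_n^+)\beta_n^+(z_n^+)}+\sqrt{\beta_n^-(z_n^-)\beta_n^+(z_n^-)};
\]
the first bracket is $O\bigl(\sup_{|z|\le1}|\alpha_n'|\cdot\gamma_n\bigr)=o(\gamma_n)$ and each square root equals $\tau_n(1+o(1))$ by slow variation, so $\gamma_n(1+o(1))=2\tau_n$, i.e.\ \eqref{50.3}. Adding the two identities instead yields in the same way $z_n^*-\alpha_n(z_n^*)=o(\gamma_n)=o(\tau_n)$; and from $z_n^\pm-z_n^*=\pm\tfrac12\gamma_n$ together with \eqref{50.3},
\[
z_n^\pm-z_n^*=\pm\sqrt{\beta_n^-(z_n^*)\beta_n^+(z_n^*)}\,\bigl(1+o(1)\bigr).
\]

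\emph{The Dirichlet eigenvalue.} Evaluating the Dirichlet equation at $z=\zeta_n$, subtracting $z_n^*-\alpha_n(z_n^*)=o(\tau_n)$, and using $|\alpha_n(\zeta_n)-\alpha_n(z_n^*)|\le\sup_{|z|\le1}|\alpha_n'|\cdot|\zeta_n-z_n^*|=o(|\zeta_n-z_n^*|)$, we get
\[
\bigl(\zeta_n-z_n^*\bigr)\bigl(1+o(1)\bigr)=-\tfrac12\bigl(\beta_n^+(\zeta_n)+\beta_n^-(\zeta_n)\bigr)+\rho_n(\zeta_n)-o(\tau_n).
\]
Under hypothesis (c), $-1$ is not a cluster point of $\beta_n^-(z_n^*)/\beta_n^+(z_n^*)$, so $|\beta_n^+(z_n^*)+\beta_n^-(z_n^*)|\gtrsim|\beta_n^+(z_n^*)|+|\beta_n^-(z_n^*)|\gtrsim|\tau_n|$; combined with $\rho_n(\zeta_n)=o(|\beta_n^+(\zeta_n)|+|\beta_n^-(\zeta_n)|)$ and slow variation, this gives $\zeta_n-z_n^*\sim-\tfrac12(\beta_n^+(z_n^*)+\beta_n^-(z_n^*))$, which is \eqref{500.4} since $\mu_n-\tfrac12(\lambda_n^++\lambda_n^-)=\zeta_n-z_n^*$. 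For \eqref{50.4} and \eqref{50.8}, put $x=\sqrt{\beta_n^+(z_n^*)}$, $y=\sqrt{\beta_n^-(z_n^*)}$ (the chosen branches), so $\tau_n=xy$, $\beta_n^+(z_n^*)=x^2$, $\beta_n^-(z_n^*)=y^2$, and by the last two displays
\[
\mu_n-\lambda_n^\pm=\bigl(\zeta_n-z_n^*\bigr)\mp\tfrac12\gamma_n=-\tfrac12(x^2+y^2)(1+o(1))\mp xy(1+o(1))=-\tfrac12(x\pm y)^2+o\bigl(|x|^2+|y|^2\bigr),
\]
where $|xy|\le\tfrac12(|x|^2+|y|^2)$ absorbs the error terms. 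If $-1$ is not a cluster point of $y/x$ (hypothesis (a)) then $|x+y|^2\gtrsim|x|^2+|y|^2$, whence $\mu_n-\lambda_n^+\sim-\tfrac12(\sqrt{\beta_n^+(z_n^*)}+\sqrt{\beta_n^-(z_n^*)})^2$, i.e.\ \eqref{50.4}; if $1$ is not a cluster point of $y/x$ (hypothesis (b)) then $|x-y|^2\gtrsim|x|^2+|y|^2$, whence $\mu_n-\lambda_n^-\sim-\tfrac12(\sqrt{\beta_n^+(z_n^*)}-\sqrt{\beta_n^-(z_n^*)})^2$, i.e.\ \eqref{50.8}.

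\emph{Main obstacle.} The algebra above is routine once the inputs are available; the real work lies in those inputs --- the slow--variation and derivative estimates for $\alpha_n,\beta_n^\pm$ (so that evaluating any of these functionals at $z_n^\pm,z_n^*$ or $\zeta_n$ is the same up to $1+o(1)$), which must be extracted from the multiple series \eqref{22.11}--\eqref{22.15}, and the Dirichlet equation together with the bound $\rho_n=o(|\beta_n^+|+|\beta_n^-|)$ (both coming from \cite{DM15,DM21}). One must also track carefully the branches $\sqrt{\beta_n^\pm}$ and the matching labeling of $\lambda_n^\pm$, since hypotheses (a)--(c) and the right--hand sides of \eqref{50.3}--\eqref{500.4} are stated relative to these choices.
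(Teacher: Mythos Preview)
The paper does not contain a proof of Proposition~\ref{thm51}; it is quoted without proof from \cite{DM31} as a background result. There is therefore no proof in this paper against which to compare your attempt.

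That said, your sketch follows the strategy one expects from the Djakov--Mityagin machinery and is essentially sound. Deriving \eqref{50.3} by subtracting the two branches of \eqref{p2}, obtaining $z_n^*-\alpha_n(z_n^*)=o(\tau_n)$ by adding them, and then carrying out the algebra with $x=\sqrt{\beta_n^+(z_n^*)}$, $y=\sqrt{\beta_n^-(z_n^*)}$ is correct. You also correctly isolate the two genuine technical inputs that are \emph{not} available in the present paper: (i) the slow--variation and derivative bounds for $\alpha_n,\beta_n^\pm$ on $\{|z|\le 1\}$ (so that evaluation at $z_n^\pm,z_n^*,\zeta_n$ agrees up to a factor $1+o(1)$), and (ii) the Dirichlet scalar equation $z-\alpha_n(z)+\tfrac12(\beta_n^+(z)+\beta_n^-(z))=\rho_n(z)$ with the remainder bound $\rho_n=o(|\beta_n^+|+|\beta_n^-|)$. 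Both are established in \cite{DM15,DM21,DM31}, and your ``Main obstacle'' paragraph identifies them accurately; without them the argument would be incomplete.

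One small point you assert without justification: under hypothesis~(a) the inequality $|x+y|^2\gtrsim |x|^2+|y|^2$ needs a brief case distinction, since ``$-1$ is not a cluster point of $y/x$'' does not exclude $|y/x|\to\infty$ along a subsequence. If $|y/x|$ stays bounded, then $|1+y/x|\ge\delta>0$ gives $|x+y|^2\ge\delta^2|x|^2\gtrsim|x|^2+|y|^2$; if $|y/x|\to\infty$ along a subsequence, then $|x/y|\to 0$ there and $|x+y|\sim|y|\gtrsim(|x|^2+|y|^2)^{1/2}$. The same remark applies, mutatis mutandis, to hypothesis~(b) and $|x-y|^2$.
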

Here and thereafter, we write for two sequences $(a_n)$
and $(b_n)$ that $a_n \sim b_n $  as $n \to \infty $
if $a_n/b_n \to 1 $ as $n \to \infty. $
We write  $a_n \asymp b_n $   if there are constants $C>c>0 $
such that $c \,a_n  \leq b_n \leq C \, a_n $ for large enough $n.$

In this paper paper we study the class of Hill potentials $v$ with
the property that the main term in the asymptotics of $\beta_n^\pm $
equals the Fourier coefficient $V(\pm 2n).$ In the context of Sobolev
spaces, a natural example of such potentials is given by the
following assertion (compare to (\ref{m1}), (\ref{m2}); see also
\cite{VS09}).

\begin{Lemma}
\label{lem00}
Suppose $v(x), \;  0 \leq x \leq \pi,$  is $m $ times differentiable
and the function $v^{(m)}(x) \,$  is
absolutely continuous.  If the conditions

(a)   $v^{(s)} (\pi) = v^{(s)} (0)$  for $s=0, \ldots, m-1$  (if $m>0$)

(b)   $v^{(m)} (\pi) \neq  v^{(m)} (0)$

hold, then we have
\begin{align}
\label{9.3}
\beta_n^- (z) & \sim V(-2n) \sim
\frac{1}{(-2in)^{m+1}} \left ( v^{(m)}(0) - v^{(m)}(\pi)   \right ) ,
\quad |z| \leq n,
 \\ \label{9.4}
\beta_n^+ (z) &  \sim V(2n) \sim
\frac{1}{(2in)^{m+1}} \left ( v^{(m)}(0) - v^{(m)}(\pi)   \right ),
\quad |z| \leq n.
\end{align}

\end{Lemma}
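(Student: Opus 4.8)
The plan is to reduce everything to the classical fact that, under hypotheses (a)--(b), the Fourier coefficients of $v$ satisfy $V(\pm 2n)\sim \frac{1}{(\pm 2in)^{m+1}}\bigl(v^{(m)}(0)-v^{(m)}(\pi)\bigr)$, and then to show that in the series \eqref{22.9} for $\beta_n^\pm(z)$ the leading term $V(\mp 2n)$ (resp. $V(\pm 2n)$) dominates the tail $\sum_{k\ge 1} S^{12}_k$ (resp. $\sum_{k\ge 1} S^{21}_k$) uniformly for $|z|\le n$. So the first step is the purely classical one: repeatedly integrating by parts in $q_{\pm 2n}=\frac{1}{\pi}\int_0^\pi v(x)e^{\pm 2inx}\,dx$, the periodicity conditions $v^{(s)}(0)=v^{(s)}(\pi)$ for $s\le m-1$ kill all boundary contributions up to order $m-1$, the first surviving boundary term produces $\frac{1}{(\pm 2in)^{m+1}}(v^{(m)}(0)-v^{(m)}(\pi))$, and the remaining integral involving $v^{(m+1)}\in L^1$ is $o(n^{-m-1})$ by Riemann--Lebesgue (applied to the integrable function $v^{(m+1)}$). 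This gives the second ``$\sim$'' in each of \eqref{9.3}, \eqref{9.4} and, in particular, $|V(\pm 2n)|\asymp n^{-m-1}$, with the lower bound coming from hypothesis (b).

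The second and main step is to bound the correction series. From the known estimates underlying Lemma~\ref{basic} — the same ones quoted in the proof of Lemma~\ref{lemrho} and in \cite[Section~2.2]{DM15}, \cite[Lemma~6]{DM21} — one has, for $|z|\le n$ and all large $n$, a bound of the shape $\bigl|\sum_{k\ge 1} S^{12}_k\bigr| + \bigl|\sum_{k\ge 1} S^{21}_k\bigr| \le C\,\eta_n$ where $\eta_n\to 0$; more precisely $\eta_n$ is controlled by tails $\bigl(\sum_{|k|\ge \sqrt n}|V(k)|^2/|k|^2\bigr)^{1/2}$ together with an $O(1/\sqrt n)$ term, i.e.\ the same $\varepsilon_n$ as in \eqref{r1}. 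The point is to upgrade this to the sharper statement $\sum_{k\ge1}S^{12}_k = o\bigl(n^{-m-1}\bigr)$. This is where the smoothness is used a second time: because $v\in W_1^{m+1}$ (and in particular $v\in W_2^{m}$ up to endpoint issues), its Fourier coefficients decay like $|V(k)|\lesssim |k|^{-m}$ in an $\ell^2$ sense, so the tail sums that make up $\eta_n$ are themselves $o(n^{-m-1})$ — one estimates $\sum_{|k|\ge\sqrt n}|V(k)|^2/|k|^2$ and the multilinear sums $S^{12}_k, S^{21}_k$ using the standard Hilbert-type / Schur-test bounds for the convolution operators with kernel $(n^2-j^2+z)^{-1}$ that are already established in the references, tracking the gain coming from the decay of the $V(j)$'s. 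Combining $|V(\pm2n)|\asymp n^{-m-1}$ with the tail being $o(n^{-m-1})$ yields $\beta_n^-(z)\sim V(-2n)$ and $\beta_n^+(z)\sim V(2n)$ uniformly for $|z|\le n$, which is exactly the first ``$\sim$'' in \eqref{9.3}, \eqref{9.4}; chaining with Step~1 finishes the proof.

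The hard part is the uniform-in-$z$ control of the correction series at the precise order $o(n^{-m-1})$: the crude bounds in the literature give $\beta_n^\pm(z)=V(\pm2n)+O(\varepsilon_n)$ with $\varepsilon_n\to0$, but one needs $\varepsilon_n=o(n^{-m-1})$, which forces one to re-run the estimates for $S^{11}_k,S^{12}_k,S^{21}_k$ keeping explicit track of the polynomial decay of the Fourier coefficients rather than just their $\ell^2$-summability. I expect this to be routine given the machinery of \cite{DM15, DM21} but to require care in splitting the index ranges $|j_i|\lesssim\sqrt n$ versus $|j_i|\gg\sqrt n$ and in counting how many factors $V(j_{i-1}-j_i)$ are ``large.'' One subtlety worth flagging: if $m=0$ the periodicity conditions (a) are vacuous, $v$ itself need not be periodic, and the relevant Fourier coefficients are those of $v$ on $[0,\pi]$ (consistent with \eqref{i05} once one writes $v=Q'$), so the $m=0$ case should be checked to agree with the normalization used elsewhere in the paper.
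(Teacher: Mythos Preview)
Your outline is correct in spirit --- Step~1 (integration by parts plus Riemann--Lebesgue to get $V(\pm 2n)\sim (\pm 2in)^{-m-1}\bigl(v^{(m)}(0)-v^{(m)}(\pi)\bigr)$, hence $|V(\pm 2n)|\asymp n^{-m-1}$) is exactly what the paper does, and your identification of the ``hard part'' (showing $\sum_{k\ge 1}S^{12}_k,\ \sum_{k\ge 1}S^{21}_k = o(n^{-m-1})$ uniformly in $z$) is the right target.

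Where you diverge from the paper is in the machinery for that hard part. You propose to re-run the $\ell^2$/Schur--test estimates from \cite{DM15,DM21}, splitting indices at $\sqrt{n}$ and tracking how many factors $V(j_{i-1}-j_i)$ carry the decay. The paper instead derives Lemma~\ref{lem00} as an immediate corollary of its Theorem~\ref{thm1}: one simply takes the weight $\Omega(k)=|k|^{m+1}$, factored as $\omega(k)=|k|$ and $\tilde\Omega(k)=|k|^m$, checks that $v\in W_\infty(\Omega)$ and $|V(\pm 2n)|\,n\,\Omega(2n)\to\infty$, and reads off \eqref{31.23}. The underlying estimate in the proof of Theorem~\ref{thm1} exploits the \emph{pointwise} bound $|V(k)|\le \|v\|_\Omega/\Omega(k)$ together with submultiplicativity of $\Omega$, which collapses the multilinear sums via $\sum_{j\neq\pm n}|n^2-j^2|^{-1}\le 2n^{-1}\log(6n)$ --- no $\sqrt{n}$-splitting or factor-counting is needed. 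Your route would also work, but the $\ell^\infty$/weighted-space packaging is what makes the anticipated ``care'' disappear; since you already have the pointwise decay $|V(k)|=O(|k|^{-m-1})$ from Step~1, the $\ell^2$ framework throws away information and then has to recover it.
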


In section 3 we introduce weighted spaces of Hill potentials (in
terms of their Fourier coefficients) and consider general classes of
potentials such that $\beta_n^\pm \sim V(\pm 2n)$  -- see
Theorem~\ref{thm1}. Lemma~\ref{lem00} is a partial case of that
theorem, which corresponds to the weight $\Omega (k) = k^m.$

\section{Weights and weighted spaces}

Since we study the Hill operator on $[0,\pi],$ our basic index set is
$2\mathbb{Z}.$ A sequence of {\em positive} numbers $\Omega = (\Omega
(k))_{k \in 2\mathbb{Z}} $ is called {\em weight}, or {\em weight
sequence}. We consider only even weights, i.e.,
\begin{equation}
\label{31.1} \Omega (-k) = \Omega (k),  \quad k \in 2\mathbb{Z},
\end{equation}
such that
\begin{equation}
\label{31.1a} \Omega (0) = 1, \quad \Omega (k) \leq \Omega (m) \quad
\text{for}  \;\; m \geq k \geq 0.
\end{equation}

For every weight $\Omega $ we consider the corresponding
$\ell^\infty$-type weighted space of Hill potentials
\begin{equation}
\label{31.1b}
W_{\infty} (\Omega ) = \left \{v(x) = \sum_{k \in 2\mathbb{Z}} V(k)
e^{ikx} \;: \; \|v\|_\Omega =\sup_{k\in 2\mathbb{Z}} |V(k)| \Omega (k) <\infty  \right \}.
\end{equation}

We say that two weights $\Omega_1 $ and $\Omega_2 $ are {\em
equivalent} if
\begin{equation}
\label{31.2} \exists\, C \geq 1 \; : \quad C^{-1} \Omega_1 (k)
\leq \Omega_2 (k)  \leq C \Omega_1 (k), \quad    k\in 2\mathbb{Z}.
\end{equation}
Obviously, equivalent weights
generate one and the same weighted space.

A weight $\Omega $ is called {\em submultiplicative} if
\begin{equation}
\label{31.3} \Omega (k+m) \leq \Omega (k) \Omega (m), \quad k,m
\in 2\mathbb{Z}.
\end{equation}

Of course, if $\Omega_1 $ and $\Omega_2 $ are equivalent weights
and one of them is submultiplicative, then the other one
satisfies
\begin{equation}
\label{31.3a} \Omega (k+m) \leq C \Omega (k) \Omega (m), \quad k,m
\in 2\mathbb{Z}
\end{equation}
for some constant $C>0.$ Obviously, if $\Omega $ satisfies
(\ref{31.3a}), then $\tilde{\Omega} = C \Omega $ satisfies
(\ref{31.3}). Moreover, it is easy to see that if (\ref{31.3a})
holds for $ |k|, |m| \geq k_0, $ then it holds for all $ k,m \in
2\mathbb{Z},$ maybe with another constant $C.$
In the sequel we call a weight {\em almost submultiplicative}
if it satisfies (\ref{31.3a}).

A weight $\omega $ is called {\em slowly increasing}
if
\begin{equation}
\label{31.4}  A:=
\sup_{k\in 2\mathbb{N}}   \omega (2k)/ \omega (k) < \infty .
\end{equation}
Every slowly increasing weight is almost submultiplicative. Indeed,
if $0 < k \leq m $  then from (\ref{31.4}) and (\ref{31.1a}) it
follows that
\begin{equation}
\label{31.5} \omega (m+k) \leq \omega (2m) \leq  A \omega (m) \leq A
\omega (m) \omega (k),
\end{equation}
so (\ref{31.3a}) holds with $C=A. $

If $\sup   \Omega (2k)/ \Omega (k) = \infty $ (i.e., if $\Omega
$ is not slowly increasing), then $\Omega $ is called {\em rapidly
increasing} weight. A rapidly increasing submultiplicative
weight $\Omega $ is growing at most exponentially because $$
\Omega (k) \leq (\Omega (2))^{k/2} = e^{a k}, \quad a = \frac{1}{2} \log
\Omega (2).$$

Each weight may be written in the form
\begin{equation}
\label{31.6} \Omega (k) = \exp (h (|k|)), \quad \text{where}\quad
h(k) = \log \Omega (k), \quad h(0) =0.
\end{equation}
Then $\Omega $ is
submultiplicative if and only if $h$ is subadditive, i.e.,
\begin{equation}
\label{31.7}  h(k+m) \leq h(k) + h(m)   \quad  \forall \,k,m \in
2\mathbb{N}.
\end{equation}

It is well known (e.g., see  \cite[Problem 98]{PS}) that if $(h(k)) $ is a
subadditive sequence, then the limit
\begin{equation}
\label{31.8} \ell = \lim_{k\to \infty} \frac{h(k)}{k}
\end{equation}
exists. A submultiplicative weight $\Omega $ of the form
(\ref{31.6}) is called {\em subexponential} if $\ell=0,$ and
{\em exponential} if $\ell >0. $

\begin{Lemma}
\label{lem1}  Let $\Omega $ be a weight of the form (\ref{31.6}).
If the corresponding sequence  $(h(k))_{k\in 2\mathbb{Z}_+}$ is
concave, i.e.,
\begin{equation}
\label{31.10}   h(k+4) - h(k+2) \leq h(k+2) - h(k) \quad\text{for}
\;\; k \geq 0,
\end{equation}
then $\Omega $ is submultiplicative.
\end{Lemma}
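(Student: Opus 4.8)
The plan is to reduce the submultiplicativity inequality $h(k+m) \le h(k) + h(m)$ to the concavity hypothesis \eqref{31.10} together with the normalization $h(0) = 0$. The key observation is that a concave sequence starting at the origin is automatically subadditive: concavity forces the increments $d(k) := h(k+2) - h(k)$ to be nonincreasing in $k \ge 0$, and $h(0) = 0$ lets us telescope $h(k)$ as a sum of the first few increments while $h(k+m) - h(m)$ is a sum of later increments, each of which is no larger.

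First I would record the monotonicity of increments. Set $d(2j) = h(2j+2) - h(2j)$ for $j \ge 0$; inequality \eqref{31.10} says exactly $d(2j+2) \le d(2j)$, so $(d(2j))_{j\ge 0}$ is nonincreasing. Then for any $k = 2p$ and $m = 2r$ with $p, r \ge 1$ (the cases $k=0$ or $m=0$ being trivial since $h(0)=0$) I would write
\begin{equation}
\label{planeq}
h(k) - h(0) = \sum_{j=0}^{p-1} d(2j), \qquad h(k+m) - h(m) = \sum_{j=r}^{p+r-1} d(2j).
\end{equation}
Since the second sum ranges over indices $j \ge r$ and the increments are nonincreasing, a term-by-term comparison (matching $d(2(r+i))$ against $d(2i)$ for $i = 0, \ldots, p-1$) gives $h(k+m) - h(m) \le h(k) - h(0) = h(k)$, i.e. $h(k+m) \le h(k) + h(m)$. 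By \eqref{31.7} this is precisely submultiplicativity of $\Omega$, and by symmetry in $k,m$ there is no loss in having assumed $p \le r$ or the reverse.

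The only genuinely delicate point is bookkeeping with the index set $2\mathbb{Z}_+$ rather than $\mathbb{Z}_+$: all sums step by $2$, and one must be careful that \eqref{31.10} as stated ($h(k+4)-h(k+2) \le h(k+2)-h(k)$ for $k \ge 0$) indeed yields $d(2j+2) \le d(2j)$ for every $j \ge 0$, which it does upon setting $k = 2j$. There is nothing else hard here; the monotonicity of the condition \eqref{31.1a} is not even needed for this implication, only $h(0)=0$. I would also remark in passing that concavity plus $h(0)=0$ gives $h(k)/k$ nonincreasing, which dovetails with the existence of the limit $\ell$ in \eqref{31.8} and the subexponential/exponential dichotomy introduced just above, though that remark is not required for the proof of the lemma.
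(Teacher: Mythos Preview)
Your proof is correct and follows essentially the same approach as the paper: both arguments telescope $h(k+m)-h(m)$ and $h(k)-h(0)$ as sums of consecutive increments and use the concavity hypothesis to compare them term by term. The paper carries this out in one displayed line without introducing the notation $d(2j)$, but the substance is identical.
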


\begin{proof}
Fix $k,m \in \mathbb{N}.$ By (\ref{31.10}), we have
$$
\begin{aligned}
h(2k+2m) - h (2k) &= \sum_{i=1}^m [h(2k+2j)-h(2k+2j-2)] \\
&\leq \sum_{j=1}^m
[h(2j)-h(2j-2)] = h(2m).
\end{aligned}
$$

Thus (\ref{31.7}) holds, i.e., the weight
$\Omega (k) = \exp (h(|k|)) $ is submultiplicative.
\end{proof}

Typical examples of submultiplicative weights are
\begin{equation}
\label{31.11} \omega_a (0) = 1, \quad \omega_a (k) = |k|^a  \quad
\text{for} \;\; k \neq 0, \; a > 0
\end{equation}
(known as the Sobolev weights), and
\begin{equation}
\label{31.12} \Omega_{c,\gamma} (k) = \exp (c|k|^\gamma ) , \quad c>
0, \; \gamma \in (0,1)
\end{equation}
(known as the Gevrey weights). The corresponding functions $h$ are
concave.

Further we need the following technical assertion.
\begin{Lemma}
\label{lem2}
For every $c>0, \; \gamma \in (0,1) $ and $a>0 $
the weight
$\Omega = (\Omega (k))_{k \in 2\mathbb{Z}} $  defined by
\begin{equation}
\label{31.13}
\Omega (0)= 1, \quad
\Omega (k) = \exp (c|k|^\gamma ) \, |k|^{-a} \quad  \text{for} \;\;
k \neq 0,
\end{equation}
is almost submultiplicative. Moreover, if
$a\leq c\gamma (1-\gamma)2^\gamma,$ then the weight $\Omega $
is submultiplicative.

\end{Lemma}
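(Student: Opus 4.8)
The plan is to pass to the additive picture: set $h(k)=\log\Omega(k)$, so that $h(0)=0$ and $h(k)=ck^\gamma-a\log k$ for even $k\ge2$, and to analyze the subadditivity defect $D(k,m):=h(k+m)-h(k)-h(m)$. Almost submultiplicativity of $\Omega$ amounts to $\sup D<\infty$, and submultiplicativity to $D\le0$ (cf.\ (\ref{31.6})--(\ref{31.7})).

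For the submultiplicative case, assume $a\le c\gamma(1-\gamma)2^\gamma$ and apply Lemma~\ref{lem1}: it suffices to verify the discrete concavity (\ref{31.10}). Regard $h$ as a $C^2$ function on $[2,\infty)$ and compute $h''(t)=t^{-2}\bigl(a-c\gamma(1-\gamma)t^\gamma\bigr)$, which is $\le0$ for every $t\ge2$ because $t^\gamma\ge2^\gamma$ and $a\le c\gamma(1-\gamma)2^\gamma$. Hence $h'$ is non-increasing on $[2,\infty)$, so for even $k\ge2$ we get $h(k+4)-h(k+2)=\int_{k+2}^{k+4}h'(t)\,dt=\int_k^{k+2}h'(s+2)\,ds\le\int_k^{k+2}h'(s)\,ds=h(k+2)-h(k)$, which is (\ref{31.10}); the remaining instance $k=0$ reads $h(4)\le2h(2)$, and after cancelling the logarithmic terms this is $4^\gamma\le2\cdot2^\gamma$, i.e.\ $\gamma\le1$. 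Thus (\ref{31.10}) holds for all $k\ge0$, and Lemma~\ref{lem1} gives (\ref{31.3}).

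For almost submultiplicativity with an arbitrary $a>0$, the key point is that the exponential part $\exp(ck^\gamma)$ is submultiplicative with a margin: by the mean value theorem applied to $t\mapsto t^\gamma$, together with $m\ge k$ and $\gamma-1<0$, one has $(k+m)^\gamma-m^\gamma\le\gamma k\,m^{\gamma-1}\le\gamma k\,k^{\gamma-1}=\gamma k^\gamma$, hence $(k+m)^\gamma-k^\gamma-m^\gamma\le-(1-\gamma)k^\gamma$ for $2\le k\le m$. Since also $\log\tfrac{km}{k+m}=\log k+\log\tfrac{m}{k+m}\le\log k$, it follows that $D(k,m)=c\bigl[(k+m)^\gamma-k^\gamma-m^\gamma\bigr]+a\log\tfrac{km}{k+m}\le-c(1-\gamma)k^\gamma+a\log k=:\psi(k)$ whenever $2\le k\le m$. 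The function $\psi$ is continuous on $[2,\infty)$ with $\psi(t)\to-\infty$ as $t\to\infty$, so $M:=\sup_{t\ge2}\psi(t)<\infty$; therefore $\Omega(k+m)\le e^M\Omega(k)\Omega(m)$ for all even $k,m\ge2$ (by symmetry of $D$ in $k,m$), the cases $k=0$ or $m=0$ are trivial, and negative indices are covered by the evenness (\ref{31.1}) of $\Omega$. So (\ref{31.3a}) holds with $C=\max(1,e^M)$; alternatively one could establish (\ref{31.3a}) only for $|k|,|m|\ge k_0$ and invoke the remark following (\ref{31.3a}).

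The one step requiring care — the main obstacle — is resisting the temptation to prove almost submultiplicativity by factoring $\Omega(k)=\exp(ck^\gamma)\,|k|^{-a}$ as a submultiplicative Gevrey weight times a negative power of $|k|$: that discards the exponential gain and leaves the unbounded factor $(km/(k+m))^{a}$, which cannot be absorbed into a constant. One must keep the exponential in play and exploit that $(k+m)^\gamma-k^\gamma-m^\gamma$ is bounded above not by $0$ but by $-(1-\gamma)k^\gamma$, a quantity that outweighs the $+a\log k$ correction once $k$ is large — which is exactly what the mean value estimate provides.
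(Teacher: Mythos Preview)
Your treatment of the submultiplicative case is essentially identical to the paper's: compute $h''(t)=t^{-2}\bigl(a-c\gamma(1-\gamma)t^\gamma\bigr)$, observe it is nonpositive on $[2,\infty)$ when $a\le c\gamma(1-\gamma)2^\gamma$, deduce the discrete concavity (\ref{31.10}) for $k\ge2$, and check the boundary case $k=0$ by hand via $4^\gamma\le 2\cdot2^\gamma$.

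For almost submultiplicativity your route genuinely differs. The paper argues structurally: it notes that $h$ is concave on $[x_0,\infty)$ with $x_0=\bigl[a/(c\gamma(1-\gamma))\bigr]^{1/\gamma}$, then builds a globally concave $h_1$ equal to $h+d$ beyond $x_0$ and linear on $[0,x_0]$; by Lemma~\ref{lem1} the weight $\Omega_1=e^{h_1}$ is submultiplicative, and since $|h-h_1|$ is bounded, $\Omega\asymp\Omega_1$, giving (\ref{31.3a}). You instead bound the defect $D(k,m)$ directly: the mean-value estimate $(k+m)^\gamma-k^\gamma-m^\gamma\le-(1-\gamma)k^\gamma$ for $2\le k\le m$ shows that the exponential part is \emph{strictly} subadditive with margin $c(1-\gamma)k^\gamma$, which eventually dominates the logarithmic correction $a\log\frac{km}{k+m}\le a\log k$, so $D(k,m)\le\psi(k)$ with $\sup_{[2,\infty)}\psi<\infty$. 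Both arguments are correct. The paper's construction is more modular---it produces an explicit equivalent submultiplicative weight, reusable elsewhere---while your approach is shorter, avoids the auxiliary construction, and makes quantitatively transparent the mechanism you highlight in your closing remark: the Gevrey gain $-(1-\gamma)k^\gamma$ is what absorbs the polynomial loss.
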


\begin{proof}
We have  $\Omega (k) = e^{h(|k|)},$ where
$$
h(0)= 0, \quad h(x) = c \, x^\gamma - a \log x \quad \text{for}
\;\; x>0.
$$
For large enough $x$ the function $h$ is concave. Indeed,
$$
h^{\prime \prime} (x) =  \frac{1}{x^2}
\left ( c \gamma (\gamma -1)x^\gamma + a \right )
<0 \quad \text{for} \;\; x>x_0,
$$
where $x_0 = \left [ \frac{a}{c\gamma (1-\gamma)} \right ]^{1/\gamma} . $
Set
$$
h_1 (x) = \begin{cases}   d+ h(x)  &  \text{for}\;\;  x >x_0, \\
[d+h(x_0)] \frac{x}{x_0}  &    \text{for}  \;\; 0\leq x \leq x_0,
\end{cases}
$$
where the constant $d>0 $  is chosen so large that
$$[d+h(x_0)]/x_0 \geq h^\prime (x_0) \quad \text{and} \quad
h(x) \leq   [d+h(x_0)]\frac{x}{x_0}  \quad \text{for} \;\; 2\leq x \leq x_0.
$$
Then $h_1  $  is a concave function on $[0, \infty) $ with $h_1 (0) = 0,$ so
by Lemma~\ref{lem1}   the weight $\Omega_1 (k) = e^{h_1 (|k|)} $
is submultiplicative.  Since
$$
h(x) \leq h_1 (x) \leq h(x) + d + h(x_0) \quad \text{for} \;\; x \geq 2,
$$
the weights $\Omega $ and $\Omega_1 $ are equivalent.

If $a\leq c\gamma (1-\gamma)2^\gamma,$
then $x_0 = \left [ \frac{a}{c\gamma (1-\gamma)}
\right ]^{1/\gamma}  \leq 2, $
so it follows that the function $h$ is concave for $x\geq 2. $
Thus,  (\ref{31.10}) holds for $k \geq 2. $

If $k=0, $ then (\ref{31.10})  reduces to $h(4) \leq 2 h(2), $  i.e.,
$$
c \, 4^\gamma - a \log 4 \leq 2 (c \, 2^\gamma - a \log 2) =
c\, 2^{1+\gamma} - a \log 4.
$$
Since $4^\gamma = 2^{2\gamma} < 2^{1+\gamma}, $
(\ref{31.10}) holds for $k=0$ as well, so by Lemma~\ref{lem1}
it follows that in this case the weight $\Omega $ is submultiplicative.

\end{proof}

\section{Main results}

\begin{Theorem}
\label{thm1}
Suppose
$\Omega = (\Omega (k))_{k \in 2\mathbb{Z}}$
is a weight of the form
\begin{equation}
\label{31.20} \Omega (k) =\omega (k) \cdot
 \tilde{\Omega} (k) ,
\end{equation}
where $\tilde{\Omega}$ is an almost submultiplicative weight
and
$\omega $  is a slowly increasing weight with
\begin{equation}
\label{31.21}
M: = \sum_{k\neq 0} \frac{1}{|k| \,\omega (k)} < \infty.
\end{equation}
Let $v\in W_{\infty} (\Omega), $  and let
 $(V(k))_{k \in 2\mathbb{Z}} $  be its Fourier coefficients.
\bigskip

(a) If $\Delta \subset \mathbb{N}$ is an infinite set such that
\begin{equation}
\label{31.22} |V(\pm 2n)| \, n\,  \Omega (2n) \to \infty  \quad
\text{as} \;\;  n \in \Delta, \;\; n \to \infty,
\end{equation}
then
\begin{equation}
\label{31.23} \beta_n^\pm (v, z) \sim V(\pm 2n) \quad \text{as} \;\;
|z|\leq n/2, \; n \in \Delta, \; n \to \infty.
\end{equation}
\bigskip

(b)  If
 \begin{equation}
\label{31.210}
 \lim_{|k|\to \infty} |V(k)|\Omega (k) =  0
\end{equation}
and $\Delta \subset \mathbb{N}$ is an infinite set such that
\begin{equation}
\label{31.220}    \exists c>0: \;\; |V(\pm 2n)| \, n\, \Omega (2n)
\geq c \quad \text{for} \quad n \in \Delta,
\end{equation}
then (\ref{31.23}) holds.

\end{Theorem}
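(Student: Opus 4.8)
The plan is to start from the series representations \eqref{22.9}, writing
$\beta_n^- = V(-2n) + \sum_{k\ge 1} S^{12}_k$ and $\beta_n^+ = V(2n) + \sum_{k\ge 1} S^{21}_k$,
and to show that the remainder is negligible relative to $V(\pm 2n)$. I would carry out the estimates for $\beta_n^-$ in detail; those for $S^{21}_k$ are entirely analogous, since the arguments of the Fourier factors in $S^{21}_k$ sum to $+2n$ instead of $-2n$ and $\omega,\tilde\Omega$ are even, so every bound below transfers verbatim. In both parts (a) and (b) the hypotheses give $|V(\pm 2n)|\,n\,\Omega(2n)\ge c>0$ for $n\in\Delta$, so it suffices to prove
\[
\Big|\sum_{k\ge 1} S^{12}_k\Big| \;\le\; \frac{\eta_n}{n\,\Omega(2n)}\,, \qquad |z|\le n/2,\ \ n\in\Delta,
\]
where $\eta_n$ stays bounded in case (a) (then \eqref{31.22}, i.e.\ $|V(-2n)|n\Omega(2n)\to\infty$, upgrades this to $\beta_n^-\sim V(-2n)$) and $\eta_n\to 0$ in case (b) (where \eqref{31.220} only gives a lower bound of constant order). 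Producing the null factor $\eta_n$ in part (b) is exactly where \eqref{31.210} enters.

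The heart of the matter is the $k=1$ term. For $n$ large, $|z|\le n/2$ and $j\ne\pm n$ with $j\equiv n\pmod{2}$ one has $|n^2-j^2|\ge 4n-4\ge 2|z|$, hence $|n^2-j^2+z|\ge\tfrac12|n^2-j^2|=\tfrac12|n-j|\,|n+j|$, uniformly in $z$. Using $|V(m)|\le\|v\|_\Omega/\Omega(m)$, the factorization $\Omega=\omega\tilde\Omega$, and the almost submultiplicativity \eqref{31.3a} of $\tilde\Omega$ combined with $|n+j|+|n-j|\ge 2n$ (so $\tilde\Omega(2n)\le\tilde\Omega(|n+j|+|n-j|)\le C\,\tilde\Omega(n+j)\,\tilde\Omega(n-j)$), one gets
\[
|S^{12}_1| \;\le\; \frac{C\,\|v\|_\Omega^2\,\omega(2n)}{\Omega(2n)}\ \sum_{j\ne\pm n}\frac{1}{\omega(n+j)\,\omega(n-j)\,|n-j|\,|n+j|}\,.
\]
Substituting $p=n+j$ (so $n-j=2n-p$) and splitting the $p$-range into $|p|\le n$, $n<p<2n$, $p>2n$, $p<0$, in each piece one of $\omega(p),\omega(2n-p)$ is $\ge\omega(n)\ge\omega(2n)/A$ by the slow increase \eqref{31.4}, one of $|p|,|2n-p|$ is $\ge n$, and the remaining factor is summed against the convergent series $M$ of \eqref{31.21}; this yields $|S^{12}_1|\le C'\,\|v\|_\Omega^2\,M/(n\,\Omega(2n))$, \emph{with no logarithmic loss}. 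For part (b), put $\varepsilon(m)=|V(m)|\,\Omega(m)$; then \eqref{31.210} means $\varepsilon(m)\to 0$, and since $\max(|n+j|,|n-j|)\ge n$ at least one of $\varepsilon(n+j),\varepsilon(n-j)$ is $\le\rho_n:=\sup_{|m|\ge n}\varepsilon(m)\to 0$, so the same computation gives $|S^{12}_1|\le C'\,\|v\|_\Omega\,M\,\rho_n/(n\,\Omega(2n))$.

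For $k\ge 2$ a crude bound suffices. Estimating each denominator by $|n^2-j_i^2+z|^{-1}\le 2|n^2-j_i^2|^{-1}$, applying \eqref{31.3a} to all $k+1$ Fourier factors (whose arguments sum to $-2n$), and extracting $\Omega(2n)$ from the single largest factor $\omega(a_{i_0})$, $|a_{i_0}|\ge 2n/(k+1)$ — which by \eqref{31.4} costs only $A^{\lceil\log_2(k+1)\rceil}$, polynomial in $k$, because $\omega(2n)\le\omega\big((k+1)|a_{i_0}|\big)\le A^{\lceil\log_2(k+1)\rceil}\omega(a_{i_0})$ — one is reduced to $\big(\sum_{j\ne\pm n}|n^2-j^2+z|^{-1}\big)^{k}=O\big((\log n/n)^{k}\big)$. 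Hence
\[
|S^{12}_k| \;\le\; \frac{A\,\|v\|_\Omega}{\Omega(2n)}\,(k+1)^{\log_2 A}\Big(\frac{C''\,\|v\|_\Omega\,\log n}{n}\Big)^{k},
\]
and for $n$ large the ratio is $<1/2$, so the series over $k\ge 2$ is dominated by its first term and $\sum_{k\ge 2}|S^{12}_k|=O\big((\log n)^2/(n^2\Omega(2n))\big)=o\big(1/(n\Omega(2n))\big)$, uniformly in $|z|\le n/2$. Adding the $k=1$ estimate gives $\big|\sum_{k\ge1}S^{12}_k\big|=O\big(1/(n\Omega(2n))\big)$ in case (a), and — with the $\rho_n$-improved $k=1$ bound — $o\big(1/(n\Omega(2n))\big)$ in case (b); dividing by $V(-2n)$ and using \eqref{31.22} resp.\ \eqref{31.220}, and treating $S^{21}_k$ identically, we obtain \eqref{31.23}.

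The step I expect to be the genuine obstacle is the logarithm-free bound for the $k=1$ term: near the singularities $j=\pm n$ the denominator is only of size $n^{-1}|n\mp j|^{-1}$, so the naive estimate $\sum_j|n^2-j^2|^{-1}\sim n^{-1}\log n$ is too weak to beat a slowly diverging lower bound in \eqref{31.22}. The remedy is to notice that near $j=-n$ the endpoint argument $-n-j$ is small while near $j=n$ it is the other endpoint argument $j-n$ that is small, so in either case the companion factor $\omega(n\pm j)^{-1}$ together with $|n\mp j|^{-1}$ is summed against $M<\infty$ rather than producing a logarithm. The second delicate point, the extraction of the $o$ in part (b) from \eqref{31.210}, rests on the elementary but essential observation that $n+j$ and $n-j$ cannot both have small modulus.
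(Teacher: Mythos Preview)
Your argument is correct and follows essentially the same route as the paper: bound $S_1$ by splitting the $j$-sum so that in each piece one of $|n\pm j|$ is $\ge n$ (absorbing $\omega(2n)$ via slow increase and contributing the factor $1/n$), while the other is summed against $M$ from \eqref{31.21}, and handle the tail $\sum_{k\ge 2}$ by the crude $\big(\sum_j|n^2-j^2|^{-1}\big)^k=O((\log n/n)^k)$ bound; the $o$-improvement in part (b) comes, as you say, from $\max(|n+j|,|n-j|)\ge n$ forcing one factor to be $\le R_n\to 0$. The only difference is cosmetic: for $k\ge 2$ the paper simply observes that $\omega$, being slowly increasing, is itself almost submultiplicative (your \eqref{31.5}), so $\Omega=\omega\tilde\Omega$ is almost submultiplicative with constant $AC$, which replaces your pigeonhole extraction of $\omega(a_{i_0})$ and the $(k+1)^{\log_2 A}$ factor by the cleaner $(AC)^k$.
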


\begin{proof}
We prove (\ref{31.23}) for $\beta_n^+ $ only;  the proof for
$\beta_n^-$  is the same.

In view of (\ref{22.9}),
\begin{equation}
\label{31.25} |\beta_n^+ (z) - V(2n)| \leq \sum_{k=1}^\infty
|S_k^{21}(z)|,
\end{equation}
where $S_k^{21}$ are defined by \eqref{22.15}.

Set
\begin{equation}
\label{31.26}
r(k) = |V(k)| \, \Omega (k), \quad R_m = \sup_{|k|\geq m}  r(k).
\end{equation}
By $v \in W_{\infty}(\Omega), $  we have that
$r(k)  \leq  \|v\|_\Omega, $
so $R_m \leq \|v\|_\Omega.$
 Moreover,  since $\tilde{\Omega}$
 satisfies (\ref{31.3a}) with some constant $C$ and
$\omega $ satisfies (\ref{31.5}) with a constant $A,$
 it follows that
\begin{equation}
\label{31.27}
\begin{aligned}
& |V(n-j_1)V(j_1 - j_2) \cdots V(j_{k-1} - j_k) V(j_k +n)| \, \Omega
(2n) \\ &\leq   (AC)^k r(n-j_1)r(j_1 - j_2) \cdots r(j_{k-1} - j_k)
r(j_k +n).
\end{aligned}
\end{equation}

First we estimate $S^{21}_1. $   By (\ref{22.15}),
(\ref{31.20}) and (\ref{31.26}),
$$
|S_1^{21}(z)| \, \Omega (2n) \leq \sum_{j \in n +
2\mathbb{Z}\setminus \{\pm n\} } \frac{C r(n-j)r(j+n)}{|n^2 - j^2
+z|}  \cdot \frac{\omega (2n)}{\omega (n-j)\omega (j+n)} .
$$
It is easy to see that
\begin{equation} \label{31.28} |n^2 -j^2 +z|
\geq |n^2 -j^2 | -|z| \geq \frac{1}{2}|n^2 -j^2 | \quad \text{if}
\;\; |z| \leq n/2.
\end{equation}
Therefore, for $|z|\leq n/2 $ we have
$$
|S_1^{21}(z)| \, \Omega (2n) \leq   \sigma_1 + \sigma_2,
$$
with
$$
\begin{aligned}
\sigma_1 &=
\sum_{j <0, \, j\neq -n}
\frac{2Cr(n-j)r(j+n)}{|n-j|\,|n+j|}  \cdot
\frac{\omega (2n)}{\omega (n-j)\omega (j+n)}\\
&\leq   \sum_{j <0, \, j\neq -n}
\frac{2CR_n \|v\|_{\Omega} }{n|n+j| \,\omega (n+j)}
\cdot \frac{\omega (2n)}{\omega (n)}
\quad (\text{by (\ref{31.26}) and} \;\;  n-j \geq n)\\
& \leq 2AC\|v\|_{\Omega} R_n \, \frac{1}{n}  \sum_{j\neq -n}
\frac{1}{|n+j | \,
\omega (n+j)}
\leq  2ACM\|v\|_{\Omega} R_n \, \frac{1}{n}
\end{aligned}
$$
by (\ref{31.4}) and (\ref{31.21}), and similarly,
$$
\begin{aligned}
\sigma_2 &=
\sum_{j >0, \, j\neq n}
\frac{2Cr(n-j)r(j+n)}{|n-j|\,|n+j|}  \cdot
\frac{\omega (2n)}{\omega (n-j)\omega (j+n)}\\
&\leq  2ACM\|v\|_{\Omega} R_n  \,\frac{1}{n}.
\end{aligned}
$$
Since $R_n \leq \|v\|_\Omega,$ it follows that
\begin{equation}
\label{31.29} |S_1^{21}(z) |  = O \left (  \frac{1}{n \, \Omega (2n)}
\right ), \quad  \;\; |z|\leq n/2.
\end{equation}

If (\ref{31.210}) holds, then $R_n \to 0, $ so we obtain that
\begin{equation}
\label{31.30} |S_1^{21}(z) | = o \left (\frac{1}{n \, \Omega (2n)}
\right), \quad  \;\; |z|\leq n/2.
\end{equation}

Next we estimate $S_k^{21}$  for $k=2,3, \ldots.$ In view of
(\ref{22.15}), (\ref{31.20}) and  (\ref{31.26})--(\ref{31.28}), we
have
$$
\begin{aligned}
|S_k^{21}(z)| \, \Omega (2n)  & \leq \sum_{j_1, \ldots j_k \neq \pm
n} 2^k (AC)^k \frac{r(n-j_1) r(j_1 -j_2) \cdots r(j_k +n)}
{|n^2 - j_1^2| \, |n^2 - j_2^2| \cdots |n^2 - j_k^2|}\\
& \leq \|v\|_{\Omega}^{k+1}  (2AC)^k \left ( \sum_{j\neq \pm n}
\frac{1}{|n^2 - j^2|}  \right )^k.
\end{aligned}
$$
Since $\sum_{j\neq \pm n}  \frac{1}{|n^2 - j^2|} \leq \frac{2\log
(6n)}{n}, $ it follows that
\begin{equation}
\label{31.40} |S_k^{21}(z)| \, \Omega (2n) \leq \|v\|_{\Omega}^{k+1}
(2AC)^k \left (  \frac{2\log (6n)}{n} \right )^k, \quad |z|\leq n/2.
\end{equation}

Now,  if $n$ is so large that
$$\frac{4\|v\|_{\Omega} AC \log (6n)}{n}< \frac{1}{2}, $$
we obtain that
\begin{equation}
\label{31.41} \sum_{k=2}^\infty |S_k^{21}| \leq \|v\|_{\Omega} \left
( \frac{4\|v\|_\Omega \log (6n)}{n}  \right )^2 \frac{1}{\Omega (2n)}
= O \left (\frac{(\log n)^2}{n^2 \Omega (2n)} \right).
\end{equation}
Thus,  if (\ref{31.22}) holds, then (\ref{31.25}), (\ref{31.29}) and
(\ref{31.41}) imply (\ref{31.23}).

Moreover, in the case when (\ref{31.210}) holds, (\ref{31.25}),
(\ref{31.220}), (\ref{31.30})  and (\ref{31.41}) prove (\ref{31.23})
for $\beta_n^+.$
\end{proof}

\begin{Corollary}
Lemma \ref{lem00} holds.
\end{Corollary}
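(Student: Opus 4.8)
The plan is to deduce the Corollary, i.e.\ Lemma~\ref{lem00}, from Theorem~\ref{thm1}. Two ingredients are needed: an asymptotic formula for the Fourier coefficients $V(\pm 2n)$, obtained by integration by parts, and the remark that a potential $v$ as in Lemma~\ref{lem00} lies in a weighted space $W_\infty(\Omega)$ with a weight $\Omega$ of the product form required by Theorem~\ref{thm1} and satisfying $|V(\pm 2n)|\,n\,\Omega(2n)\to\infty$.

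First I would treat the coefficients. Since $v^{(m)}$ is absolutely continuous, $v^{(m+1)}\in L^1([0,\pi])$, and for every $k\in 2\mathbb{Z}\setminus\{0\}$ integrating by parts $m+1$ times in the integral defining $V(k)$ produces boundary terms of orders $s=0,\dots,m$ (the $s$-th being a fixed constant times $(ik)^{-(s+1)}(v^{(s)}(\pi)-v^{(s)}(0))$) together with a remainder proportional to $(ik)^{-(m+1)}\int_0^\pi v^{(m+1)}(x)e^{\pm ikx}\,dx$. Hypothesis (a) kills the boundary terms of orders $<m$; by hypothesis (b) the order-$m$ term is a \emph{nonzero} multiple of $(ik)^{-(m+1)}(v^{(m)}(0)-v^{(m)}(\pi))$; and the remainder is $o(|k|^{-(m+1)})$ by the Riemann--Lebesgue lemma and always $O(|k|^{-(m+1)})$, with constant $\|v^{(m+1)}\|_{L^1}$. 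Thus
\[
V(\pm 2n)=\frac{c_m}{(\pm 2in)^{m+1}}\bigl(v^{(m)}(0)-v^{(m)}(\pi)\bigr)+o(n^{-(m+1)}),
\]
with $c_m\neq 0$ an absolute constant; this gives the right-hand asymptotics of \eqref{9.3}--\eqref{9.4}, shows $|V(\pm 2n)|\asymp n^{-(m+1)}$, and (the identity being exact for each $k\neq 0$) gives the uniform bound $|V(k)|\le C|k|^{-(m+1)}$, $k\neq 0$.

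Then I would invoke Theorem~\ref{thm1} with the weight $\Omega=\omega_{m+1}$, i.e.\ $\Omega(0)=1$ and $\Omega(k)=|k|^{m+1}$ for $k\neq 0$, factored as $\Omega=\omega\cdot\tilde\Omega$ with $\omega=\omega_1$ and $\tilde\Omega=\omega_m$. Here $\tilde\Omega=\omega_m$ is submultiplicative (the Sobolev weights \eqref{31.11} are, and $\omega_0\equiv 1$), hence almost submultiplicative; $\omega=\omega_1$ is slowly increasing, with $M=\sum_{k\in 2\mathbb{Z}\setminus\{0\}}|k|^{-2}<\infty$, so \eqref{31.21} holds. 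The uniform bound $|V(k)|\le C|k|^{-(m+1)}$ gives $\sup_k|V(k)|\,\Omega(k)<\infty$, i.e.\ $v\in W_\infty(\Omega)$, while the first step gives $|V(\pm 2n)|\,n\,\Omega(2n)\sim \mathrm{const}\cdot n\to\infty$, which is \eqref{31.22} with $\Delta=\mathbb{N}$. Theorem~\ref{thm1}(a) then yields $\beta_n^\pm(v,z)\sim V(\pm 2n)$ for $|z|\le n/2$; combined with the first step, this is exactly \eqref{9.3}--\eqref{9.4}. (That Lemma~\ref{lem00} allows $|z|\le n$ rather than $|z|\le n/2$ costs nothing: the bound on $z$ enters only through the elementary estimate \eqref{31.28}, which stays valid for $|z|\le n$ since $|n^2-j^2|\ge 4n-4\ge 2n$ whenever $j\in n+2\mathbb{Z}$, $j\neq\pm n$, $n\ge 2$.)

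The argument is routine once the weight is fixed, and the one point deserving care is precisely that choice: $\Omega$ must be large enough that $|V(\pm 2n)|\,n\,\Omega(2n)\to\infty$ yet small enough --- and of the product form $\omega\cdot\tilde\Omega$ with $\sum 1/(|k|\omega(k))<\infty$ --- to keep $v$ inside $W_\infty(\Omega)$. The Sobolev weight $|k|^{m+1}=\omega_1\cdot\omega_m$ threads this needle, which is why it is $m+1$, not $m$, integrations by parts (equivalently, the absolute continuity of $v^{(m)}$) that the proof uses, and why part (a) of Theorem~\ref{thm1} suffices without the extra hypothesis \eqref{31.210}.
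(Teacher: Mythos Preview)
Your proposal is correct and follows essentially the same approach as the paper: both obtain the asymptotics of $V(k)$ via integration by parts and the Riemann--Lebesgue lemma, then apply Theorem~\ref{thm1}(a) with the Sobolev weight $\Omega(k)=|k|^{m+1}$ factored as $\omega(k)=|k|$ and $\tilde\Omega(k)=|k|^{m}$. Your write-up is in fact more thorough than the paper's, since you explicitly verify the weight hypotheses and address the $|z|\le n$ versus $|z|\le n/2$ discrepancy, which the paper passes over in silence.
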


\begin{proof}
Indeed, integration by parts and the Riemann-Lebesgue Lemma show
that
$$
V(k)\sim \frac{1/\pi}{(ik)^{m+1}} (v^{(m)}(0) -v^{(m)}(\pi)).
$$
Consider the weight $\Omega$  defined by
$$
\Omega (0) =1, \quad \Omega (k) = |k|^{m+1} \quad \text{for} \;\;
k\neq 0.
$$
Then $v \in W_\infty (\Omega) $ and $|V(\pm 2n)| \, n \,\Omega
(2n)\to \infty. $

We can apply Theorem~\ref{thm1}, since the weight $\Omega $
satisfies (\ref{31.20}) with $\omega (k) = |k|, \; \tilde{\Omega}(k)
= |k|^m.$ Hence, Lemma~\ref{lem00} follows from (\ref{31.23}).

\end{proof}

In view of Lemma~\ref{lem2} one can apply Theorem~\ref{thm1} to
weights $\Omega $ of the form
\begin{equation}
\label{31.51} \Omega (k) = |k|^a e^{c|k|^\gamma} \quad \text{for}
\;\; k\neq 0, \quad a \in \mathbb{R}, \; c>0, \; \gamma \in (0,1).
\end{equation}
But it is impossible to apply Theorem~\ref{thm1} if the weight
$\Omega$ is growing so slowly that $\sum \frac{1}{|k|\, \Omega (k)}
= \infty.$ For example, this is the case if we consider the weight
$\Omega$ given by
\begin{equation}
\label{31.52} \Omega (0)= 1, \quad  \Omega (k) = \log (e|k|)  \quad
\text{for} \;\; k\neq 0.
\end{equation}
For such weights, the next theorem gives conditions  which guarantee
that $V(\pm 2n) $  is the main term in the asymptotics of
$\beta_n^\pm. $

\begin{Theorem}
\label{thm2}
Suppose
$\Omega = (\Omega (k))_{k \in 2\mathbb{Z}}$
is an almost submultiplicative weight.
Let $v\in W_{\infty} (\Omega) $,  and let
 $(V(k))_{k \in 2\mathbb{Z}} $  be its Fourier coefficients.
\bigskip

(a) If $\Delta \subset \mathbb{N}$ is an infinite set such that
\begin{equation}
\label{41.22}
|V(\pm 2n)| \, \frac{n}{\log n}\,
 \Omega (2n) \to \infty  \quad \text{as} \;\;
   n \in \Delta,  \;\; n \to \infty,
\end{equation}
then
\begin{equation}
\label{41.23} \beta_n^\pm (v, z) \sim V(\pm 2n) \quad \text{as} \;\;
|z| \leq n/2, \: n \in \Delta, \; n \to \infty.
\end{equation}
\bigskip

(b)  If
 \begin{equation}
\label{41.210}
 \lim_{|k|\to \infty} |V(k)|\Omega (k) = 0
\end{equation}
and $\Delta \subset \mathbb{N}$ is an infinite set such that
\begin{equation}
\label{41.220} \exists c>0: \quad |V(\pm 2n)| \, \frac{n}{\log n}\,
\Omega (2n) \geq c \quad \text{for} \quad n \in \Delta,
\end{equation}
then (\ref{41.23}) holds.

\end{Theorem}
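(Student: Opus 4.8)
The plan is to follow the same strategy as in the proof of Theorem~\ref{thm1}, replacing the summable-weight estimate of $\sigma_1,\sigma_2$ by the crude bound $\sum_{j\neq \pm n}\frac{1}{|n^2-j^2|}\leq \frac{2\log(6n)}{n}$, which is exactly why the extra factor $\log n$ appears in \eqref{41.22} and \eqref{41.220}. As before, it suffices to treat $\beta_n^+$, since the argument for $\beta_n^-$ is identical. Starting from \eqref{31.25}, i.e. $|\beta_n^+(z)-V(2n)|\leq \sum_{k\geq 1}|S_k^{21}(z)|$, and using that $\Omega$ is almost submultiplicative, so \eqref{31.3a} holds with some constant $C$, we get for each $k\geq 1$ the analogue of \eqref{31.27}:
\[
|V(n-j_1)\cdots V(j_k+n)|\,\Omega(2n)\leq C^k\, r(n-j_1)\cdots r(j_k+n),
\]
where $r(k)=|V(k)|\Omega(k)\leq \|v\|_\Omega$ and $R_m=\sup_{|k|\geq m} r(k)$.

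For the first term, combining this with \eqref{31.28} (so that $|n^2-j^2+z|\geq \tfrac12|n^2-j^2|$ for $|z|\leq n/2$) gives
\[
|S_1^{21}(z)|\,\Omega(2n)\leq 2C\sum_{j\neq\pm n}\frac{r(n-j)\,r(j+n)}{|n^2-j^2|}
\leq 2C\,\|v\|_\Omega\, R_n\sum_{j\neq\pm n}\frac{1}{|n^2-j^2|}
\leq 4C\|v\|_\Omega R_n\,\frac{\log(6n)}{n},
\]
where we used $r(n-j)\leq \|v\|_\Omega$ for all $j$, and $r(j+n)\leq R_n$ when... more carefully, one of the two indices $n-j$ or $j+n$ always has absolute value $\geq n$, so one factor is $\leq R_n$ and the other $\leq\|v\|_\Omega$. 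Hence $|S_1^{21}(z)|=O\!\big(\tfrac{\log n}{n\,\Omega(2n)}\big)$ for $|z|\leq n/2$; if moreover \eqref{41.210} holds then $R_n\to 0$ and $|S_1^{21}(z)|=o\!\big(\tfrac{\log n}{n\,\Omega(2n)}\big)$.

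For $k\geq 2$, the estimate \eqref{31.40} already obtained in the proof of Theorem~\ref{thm1} applies verbatim (it only used almost submultiplicativity of $\Omega$, not the $\omega$-splitting), giving $|S_k^{21}(z)|\,\Omega(2n)\leq \|v\|_\Omega^{k+1}(2C)^k\big(\tfrac{2\log(6n)}{n}\big)^k$ for $|z|\leq n/2$; consequently, for $n$ large enough that $\tfrac{4\|v\|_\Omega C\log(6n)}{n}<\tfrac12$,
\[
\sum_{k=2}^\infty |S_k^{21}(z)|\leq \|v\|_\Omega\Big(\tfrac{4\|v\|_\Omega C\log(6n)}{n}\Big)^2\frac{1}{\Omega(2n)}=O\!\Big(\frac{(\log n)^2}{n^2\,\Omega(2n)}\Big),
\]
which is dominated by the bound on $|S_1^{21}(z)|$ for large $n$. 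Combining, $|\beta_n^+(z)-V(2n)|=O\!\big(\tfrac{\log n}{n\,\Omega(2n)}\big)$ in general, and $=o\!\big(\tfrac{\log n}{n\,\Omega(2n)}\big)$ when \eqref{41.210} holds. Dividing by $V(2n)$: under hypothesis (a), \eqref{41.22} says $|V(2n)|\cdot\tfrac{n}{\log n}\Omega(2n)\to\infty$, i.e. $\tfrac{\log n}{n\,\Omega(2n)}=o(|V(2n)|)$, so $\beta_n^+(z)/V(2n)\to 1$; under hypothesis (b), \eqref{41.220} gives $\tfrac{\log n}{n\,\Omega(2n)}=O(|V(2n)|)$ while the error is $o\big(\tfrac{\log n}{n\,\Omega(2n)}\big)$, so again $\beta_n^+(z)/V(2n)\to 1$. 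This proves \eqref{41.23}. The only genuinely new point compared with Theorem~\ref{thm1} is the bookkeeping in the bound for $S_1^{21}$ — tracking which of the two factors $r(n-j)$, $r(j+n)$ to bound by $R_n$ and which by $\|v\|_\Omega$ — so that the argument goes through with the $\log n$ loss but no summability of the weight; everything else is a direct transcription of the previous proof with $\omega\equiv 1$ and $A=1$.
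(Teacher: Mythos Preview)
Your proof is correct and follows essentially the same approach as the paper's own proof: estimate $S_1^{21}$ via $r(n-j)r(j+n)\leq R_n\|v\|_\Omega$ together with the crude bound $\sum_{j\neq\pm n}|n^2-j^2|^{-1}\leq 2\log(6n)/n$, and handle the tail $\sum_{k\geq 2}|S_k^{21}|$ exactly as in Theorem~\ref{thm1}. Your explicit remark that one of $|n-j|$, $|j+n|$ is always at least $n$ is precisely the (unstated) reason behind the paper's inequality $r(n-j)r(n+j)\leq R_n\|v\|_\Omega$.
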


\begin{proof}
As in the proof of Theorem~\ref{thm1}, we consider $\beta_n^+$ only
and use the notations (\ref{31.25}) and (\ref{31.26}).

Since the weight $\Omega $ is almost submultiplicative,  we have
\begin{equation}
\label{41.27}
\begin{aligned}
& |V(n-j_1)V(j_1 - j_2) \cdots V(j_{k-1} - j_k) V(j_k +n)| \, \Omega
(2n) \\ &\leq   C^k r(n-j_1)r(j_1 - j_2) \cdots r(j_{k-1} - j_k)
r(j_k +n)
\end{aligned}
\end{equation}
with some constant $C\geq 1. $

As in the proof of Theorem~\ref{thm1} we obtain
\begin{equation}
\label{41.410} \sum_{k=2}^\infty |S_k^{21}(z)| = O \left (\frac{(\log
n)^2}{n^2 \Omega (2n)} \right), \quad |z|\leq n/2.
\end{equation}

Next we estimate $S^{21}_1. $   By (\ref{22.15}), (\ref{31.26}) and
(\ref{41.27}) we obtain (since $r(n-j)r(n+j) \leq R_n \|v\|_\Omega$)
$$
\begin{aligned}
|S_1^{21}(z)| \, \Omega (2n) &\leq \sum_{j \in (n +
2\mathbb{Z})\setminus \{\pm n\} } \frac{2C r(n-j)r(j+n)}{|n^2 -
j^2|}\\
& \leq 2C R_n \|v\|_\Omega \sum_{j\neq \pm n} \frac{1}{|n^2 - j^2|}
\\&\leq  4C R_n \|v\|_\Omega  \frac{\log (6n)}{n}.
\end{aligned}
$$
Since $R_n \leq \|v\|_\Omega,$ it follows that
\begin{equation}
\label{41.29} |S_1^{21} (z) | = O \left (\frac{\log n}{n \, \Omega
(2n)} \right), \quad |z|\leq n/2.
\end{equation}

If (\ref{41.210}) holds, then $R_n \to 0, $ so we obtain that
\begin{equation}
\label{41.300} |S_1^{21}(z) | = o \left (\frac{\log n}{n \, \Omega
(2n)} \right), \quad |z|\leq n/2.
\end{equation}

Finally,   if (\ref{41.22}) holds, then (\ref{41.410}) and
(\ref{41.29}) imply (\ref{41.23}) for $\beta_n^+.$  Moreover, if
(\ref{41.210}) holds, then (\ref{41.220}), (\ref{41.410}) and
(\ref{41.300}) prove (\ref{41.23}) for $\beta_n^+.$

\end{proof}

\begin{Theorem}
\label{thm4} Let $L=L(v)$ be the Hill operator with a potential $v$
that satisfies (with some infinite set of indices $\Delta \subset
2\mathbb{N}$ or $\Delta \subset 2\mathbb{N}+1$) the assumptions of
either part (a) or part (b) of Theorem~ \ref{thm1}, or either part
(a) or part (b) of Theorem~ \ref{thm2}. Then there are square roots
$\sqrt{V(-2n)}$  and $\sqrt{V(2n)}$ such that:
\begin{equation}
\label{61.3}  (a) \quad \quad \lambda_n^+ -\lambda_n^- \sim  2
\sqrt{V(-2n)} \sqrt{V(2n) } \quad \text{as} \quad n\in \Delta, \;
n\to \infty.
\end{equation}

(b) If $\;-1$ is not a cluster point of the sequence $\left (
\sqrt{V(-2n)} / \sqrt{V(2n)}\right)_{n\in \Delta},$ then
\begin{equation}
\label{61.4} \mu_n - \lambda_n^+ \sim  -\frac{1}{2} \left
(\sqrt{V(-2n)} + \sqrt{V(2n)} \right )^2 \quad \text{as} \; \; n\in
\Delta, \; n\to \infty.
\end{equation}

(c) If $\;1$ is not a cluster point of the sequence $\left (
\sqrt{V(-2n)} / \sqrt{V(2n)}\right)_{n\in \Delta},$ then
\begin{equation}
\label{61.5} \mu_n - \lambda_n^- \sim  -\frac{1}{2} \left
(\sqrt{V(-2n)} - \sqrt{V(2n)} \right )^2 \quad \text{as} \;\; n\in
\Delta, \; n\to \infty.
\end{equation}

(d) If $-1$  is not a cluster point of $(V(-2n)/V(2n))_{n\in \Delta},$ then
\begin{equation}
\label{61.7} \mu_n - \frac{1}{2}(\lambda_n^+ + \lambda_n^-) \sim
-\frac{1}{2} (V(-2n) + V(2n)) \quad \text{as} \;\; n\in \Delta, \;
n\to \infty.
\end{equation}

(e)  Moreover, if $u_n^-, u_n^+ $
are normalized eigenvectors corresponding to
the eigenvalues $\lambda_n^-, \lambda_n^+ ,$  then the system
$\{u_n^\pm, \; n \in \Delta\}$
is a Riesz basis in its closed linear span if and only if
\begin{equation}
\label{61.10} V(-2n) \asymp V(2n) \quad \text{as} \quad  n\in \Delta,
\; n\to \infty.
\end{equation}

\end{Theorem}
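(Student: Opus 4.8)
The plan is to assemble the statement from three ingredients already available in the excerpt: Theorems~\ref{thm1} and \ref{thm2}, which under each of the four listed sets of hypotheses give
\[
\beta_n^\pm(v,z)\sim V(\pm 2n),\qquad |z|\le n/2,\ n\in\Delta,\ n\to\infty ;
\]
Proposition~\ref{thm51}, which converts conditions \eqref{a1}--\eqref{a2} into the asymptotic formulas \eqref{50.3}, \eqref{50.4}, \eqref{50.8}, \eqref{500.4}; and Criterion~\ref{crit2}, which reads the Riesz basis property off from $t_n(z_n^*)$. First I would verify that the hypotheses of Proposition~\ref{thm51} hold along $\Delta$. In each of the four cases the relevant growth condition (\eqref{31.22}, \eqref{31.220}, \eqref{41.22} or \eqref{41.220}) forces $V(\pm 2n)\neq 0$ for all large $n\in\Delta$; since $\beta_n^\pm(z)/V(\pm 2n)\to 1$ uniformly on $|z|\le n/2\supset\{|z|\le 1\}$, this gives \eqref{a1}, and for $n$ large enough $\tfrac12\le|\beta_n^\pm(z)|/|\beta_n^\pm(0)|\le 2$ for $|z|\le 1$, i.e.\ \eqref{a2} with $c=2$. (Discarding finitely many indices of $\Delta$ is harmless throughout.)

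Next, $z_n^*=\tfrac12(\lambda_n^-+\lambda_n^+)-n^2$ satisfies $|z_n^*|<n/4<n/2$ by Lemma~\ref{basic}, so the conclusion of Theorem~\ref{thm1}/\ref{thm2} applies at $z=z_n^*$: $\beta_n^\pm(z_n^*)\sim V(\pm 2n)$. Fixing arbitrary branches $\sqrt{V(\pm 2n)}$, the ratios $\beta_n^\pm(z_n^*)/V(\pm 2n)$ eventually lie in the right half-plane, so their principal square roots tend to $1$; multiplying, one gets branches $\sqrt{\beta_n^\pm(z_n^*)}$ with $\sqrt{\beta_n^\pm(z_n^*)}/\sqrt{V(\pm 2n)}\to 1$ as $n\in\Delta$. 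Then \eqref{50.3} immediately yields \eqref{61.3}.

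For parts (b), (c), (d) I would record the elementary fact that if $x_n\sim x_n'$, $y_n\sim y_n'$ and $-1$ (resp.\ $+1$) is not a cluster point of $(y_n'/x_n')$, then $-1$ (resp.\ $+1$) is not a cluster point of $(y_n/x_n)$ and $x_n+y_n\sim x_n'+y_n'$ (resp.\ $x_n-y_n\sim x_n'-y_n'$), hence also $(x_n+y_n)^2\sim(x_n'+y_n')^2$ (resp.\ $(x_n-y_n)^2\sim(x_n'-y_n')^2$). Applying this with $(x_n,y_n)=(\sqrt{\beta_n^+(z_n^*)},\sqrt{\beta_n^-(z_n^*)})$ and $(x_n',y_n')=(\sqrt{V(2n)},\sqrt{V(-2n)})$ converts \eqref{50.4} into \eqref{61.4} and \eqref{50.8} into \eqref{61.5}; applying it with $(x_n,y_n)=(\beta_n^+(z_n^*),\beta_n^-(z_n^*))$ and $(x_n',y_n')=(V(2n),V(-2n))$ converts \eqref{500.4} into \eqref{61.7}. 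The cluster‑point hypotheses in (b)--(d) are stated in terms of the $V$'s, which is exactly what one needs to invoke the corresponding parts of Proposition~\ref{thm51} (the relevant $\beta$-ratios are asymptotically equal to the $V$-ratios, so they have the same finite cluster points).

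Finally, part (e): by \eqref{61.3} and $V(\pm 2n)\neq 0$ we have $\gamma_n=\lambda_n^+-\lambda_n^-\neq 0$ for large $n\in\Delta$, so for such $n$ the eigenvalues $\lambda_n^\pm$ are simple and distinct (Lemma~\ref{loc}), hence $n\in\mathcal{M}^\pm$ and the normalized eigenvectors $u_n^\pm$ are determined up to unimodular factors. Apply Criterion~\ref{crit2}(a) with this $\Delta\subset\Gamma^\pm$: the system $\{u_n^\pm:\ n\in\Delta\}$ is a Riesz basis in its closed linear span if and only if $\limsup_{n\in\Delta}t_n(z_n^*)<\infty$. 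Since $\beta_n^\pm(z_n^*)\sim V(\pm 2n)$ gives $t_n(z_n^*)\sim\max\{|V(-2n)|/|V(2n)|,\ |V(2n)|/|V(-2n)|\}$, that condition is precisely \eqref{61.10}; removing the finitely many discarded indices does not affect being a Riesz basis in the closed linear span. The only genuine friction in the argument is bookkeeping — ensuring that the ``$\sim$'' in Theorems~\ref{thm1}/\ref{thm2} is uniform in $z$ so it survives the substitution $z=z_n^*$, and dispatching the case where the ratio $y_n'/x_n'$ is unbounded in the auxiliary lemma (a short case split on subsequences along which $y_n'/x_n'$ converges in $\mathbb{C}\cup\{\infty\}$); everything else is a direct citation.
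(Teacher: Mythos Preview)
Your proposal is correct and matches the paper's intended argument: the paper states Theorem~\ref{thm4} without proof, precisely because it is meant to be read off from Theorems~\ref{thm1}/\ref{thm2} (giving $\beta_n^\pm(z)\sim V(\pm 2n)$ uniformly for $|z|\le n/2$), Proposition~\ref{thm51} (the asymptotics \eqref{50.3}--\eqref{500.4}) and Criterion~\ref{crit2} (the Riesz basis statement), exactly as you do. Your bookkeeping for \eqref{a1}--\eqref{a2}, the branch choices of the square roots, the transfer of cluster-point hypotheses from the $V$-ratios to the $\beta$-ratios, and the use of Criterion~\ref{crit2}(a) for part~(e) are all in order.
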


Notice that part (e) of Theorem~\ref{thm4} generalizes the results
of Makin \cite{Ma06-1} to a much wider classes of potentials that
include both singular potentials and potentials in Carlemann-Gevrey
classes far beyond the Sobolev spaces.

\section{Examples}

1. Further we say that a Hill operator $L(v)$ has the periodic (or
antiperiodic) Riesz basis property (RBP) if the periodic (or
antiperiodic) root function system of $L(v)$ contains Riesz bases. In
view of Theorems~\ref{thm1}, \ref{thm2} and \ref{thm4} it is easy to
give nontrivial examples of potentials $v$ such that the operator
$L(v)$ has or has not the periodic and/or antiperiodic Riesz basis
property.

Indeed, the relation $v\in W_\infty (\Omega)$ means that the Fourier
coefficients $(V(k))_{k\in 2\mathbb{Z}}$ of a potential $v$ have the
form
\begin{equation}
\label{5.01} V(k) = \frac{\eta (k)}{\Omega (k)} \quad \text{with}
\;\; (\eta (k)) \in \ell^\infty (2\mathbb{Z}).
\end{equation}
Conversely, we may determine a potential $v$ by defining its Fourier
coefficients by (\ref{5.01}).

Theorems~\ref{thm1} and \ref{thm4} imply immediately the following.
\begin{Proposition}
Let $\Omega$ be a weight that satisfies the conditions \eqref{31.20}
and \eqref{31.21}.  Choose a bounded scalar sequence $(\eta
(k))_{k\in 2\mathbb{Z}} $ so that
$$
\begin{aligned}
&(i) \quad n \cdot \eta (\pm 2n) \to \infty \quad \text{as} \quad n
\in
\mathbb{N},\;\; n \to \infty;\\
&(ii) \quad  \eta ( -2n) \asymp \eta (2n) \quad  \text{or} \quad
(ii^*) \quad  \eta (-2n) \not \asymp \eta (2n) \quad  \text{as} \quad
n \in 2\mathbb{N};\\
&(iii) \;\; \eta ( -2n) \asymp \eta (2n) \quad  \text{or} \quad
(iii^*) \;\;  \eta ( -2n) \not \asymp \eta (2n) \quad  \text{as}
\quad n \in 1+ 2\mathbb{N}.
\end{aligned}
$$
Then the operator $L(v)$ with a potential $v$ given by (\ref{5.01})
has/(has not) the periodic RBP if respectively (ii)/$(ii^*)$ holds,
and has/(has not) the antiperiodic RBP if respectively
(iii)/$(iii^*)$ holds.
\end{Proposition}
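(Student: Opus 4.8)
The plan is to derive the Proposition as an essentially immediate consequence of Theorems~\ref{thm1} and \ref{thm4}, by unwinding what the hypotheses (i)--(iii) say about the Fourier coefficients $V(k)=\eta(k)/\Omega(k)$ of the potential $v$ defined by \eqref{5.01}. First I would observe that since $(\eta(k))\in\ell^\infty(2\mathbb{Z})$ and $\Omega(0)=1$, $\Omega(k)\leq\Omega(m)$ for $m\geq k\geq 0$, the function $v=\sum_{k\in 2\mathbb{Z}}V(k)e^{ikx}$ indeed lies in $W_\infty(\Omega)$ with $\|v\|_\Omega=\sup_k|\eta(k)|<\infty$; and that $\Omega$ by hypothesis satisfies \eqref{31.20}--\eqref{31.21}, so Theorem~\ref{thm1}(a) is applicable once its growth hypothesis \eqref{31.22} is checked.

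Next I would verify \eqref{31.22} along the relevant index set. Writing out $|V(\pm 2n)|\,n\,\Omega(2n)$ and substituting \eqref{5.01} gives
\begin{equation}
\label{5.prop.eq1}
|V(\pm 2n)|\,n\,\Omega(2n)=\frac{|\eta(\pm 2n)|}{\Omega(2n)}\,n\,\Omega(2n)=n\,|\eta(\pm 2n)|,
\end{equation}
which tends to $\infty$ as $n\to\infty$ precisely by hypothesis (i). Hence Theorem~\ref{thm1}(a) applies with, say, $\Delta=2\mathbb{N}$ (for the periodic case) or $\Delta=1+2\mathbb{N}$ (for the antiperiodic case), and yields $\beta_n^\pm(v;z)\sim V(\pm 2n)=\eta(\pm 2n)/\Omega(2n)$ uniformly for $|z|\leq n/2$ along $\Delta$. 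In particular $v$ satisfies the hypotheses of part (a) of Theorem~\ref{thm1} with these choices of $\Delta$, so all conclusions of Theorem~\ref{thm4}, and in particular part (e), are available for $v$.

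Now I would apply Theorem~\ref{thm4}(e): the normalized eigenfunction system $\{u_n^\pm,\ n\in\Delta\}$ is a Riesz basis in its closed linear span if and only if $V(-2n)\asymp V(2n)$ as $n\in\Delta$, $n\to\infty$. Dividing through by $\Omega(2n)$ (which is positive and cancels), the relation $V(-2n)\asymp V(2n)$ is equivalent to $\eta(-2n)\asymp\eta(2n)$ as $n\in\Delta$. Taking $\Delta=2\mathbb{N}$, this is exactly condition (ii) (and its negation is $(ii^*)$); taking $\Delta=1+2\mathbb{N}$, it is exactly condition (iii) (and its negation $(iii^*)$). Finally I would invoke Criterion~\ref{crit2}(b) (in the form of its restricted statement, part (a), applied to $\Delta=\Gamma^\pm$): the full periodic (resp. antiperiodic) root function system of $L(v)$ contains a Riesz basis if and only if $\{u_n^\pm\}$ over the corresponding full index set is a Riesz basis in its closed span, i.e. if and only if \eqref{cr11} holds, which under our hypotheses reduces via $\beta_n^\pm\sim V(\pm 2n)$ to $\eta(-2n)\asymp\eta(2n)$. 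This gives the stated dichotomy: RBP holds in the periodic case iff (ii), fails iff $(ii^*)$; RBP holds in the antiperiodic case iff (iii), fails iff $(iii^*)$.

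The only mild subtlety --- and the one step deserving care rather than routine citation --- is the bookkeeping passage from "$\{u_n^\pm,\ n\in\Delta\}$ is a Riesz basis in its closed span" (the conclusion of Theorem~\ref{thm4}(e), which concerns only the high-index part indexed by $\Delta$) to "the root function system of $L(v)$ \emph{contains} a Riesz basis" (the definition of RBP). This is handled exactly as in \cite{DM28, DM26-2}: outside a finite set the eigenvalues $\lambda_n^\pm$ lie in the disjoint discs $D_n$ by Lemma~\ref{loc}, for $n\in\Delta$ the two eigenfunctions $u_n^\pm$ together with the finitely many remaining root functions span $L^2([0,\pi])$ (after adjoining a Riesz basis of the finite-dimensional complement), and a standard perturbation/block-diagonal argument shows the whole system is a Riesz basis iff the high-index block is. Since hypothesis (i) forces $\beta_n^\pm(0)\neq 0$ for large $n$ (indeed $|\beta_n^\pm(0)|\sim|V(\pm 2n)|\geq c/(n\,\Omega(2n))>0$), there are no zero-gap exceptional indices to worry about beyond finitely many, so $\mathcal{M}^\pm$ contains all large $n$ and the reduction is clean. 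No genuine new estimate is needed --- everything reduces to the two cited theorems plus this standard finite-rank bookkeeping.
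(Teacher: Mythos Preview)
Your proposal is correct and follows essentially the same route as the paper, which simply states that the Proposition follows ``immediately'' from Theorems~\ref{thm1} and~\ref{thm4}. You supply the details the paper omits---in particular the verification of \eqref{31.22} via \eqref{5.prop.eq1} and the passage from Theorem~\ref{thm4}(e) (Riesz basis in the closed span of the high-index eigenfunctions) to the full RBP via Criterion~\ref{crit2}---and these details are handled correctly.
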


To be more specific, let us consider the following example, where
conditions (ii) and $(iii^*)$ hold.
\begin{Example}
\label{example1} Let $v$  be defined by (\ref{5.01}) with a sequence
$(\eta (k)) $ given by $\eta (0) =0$ and
\begin{equation}
\label{5.02} \eta (2n) = \frac{\log n}{n} \quad \text{for} \;\; n \in
\mathbb{N}, \quad \eta (-2n)= \begin{cases} (\log n)/n &
 \text{for} \;\; n \in 2\mathbb{N},\\
(\log n)^2/n & \text{for} \;\; n \in 1+ 2\mathbb{N}.
\end{cases}
\end{equation}
Then the operator $L(v) $ has the periodic Riesz basis property and
fails the antiperiodic Riesz basis property.
\end{Example}

Notice that we cannot apply Theorem~\ref{thm2} to the case given by
Example~\ref{example1}. But on the other hand Theorem~\ref{thm2}
works for a wider class of potentials as the following example shows.
\begin{Example}
\label{example2} Consider the potential
$$
\Omega = (\Omega (k)), \quad \Omega (k)=1 \;\; \forall k \in
2\mathbb{Z}.
$$
Let $v$ be the potential defined formally by its Fourier
coefficients $(V(k))$ given by $V(0) =0$ and
\begin{equation}
\label{5.03} V (2n) = 1 \quad \text{for} \;\; n \in \mathbb{N}, \quad
V (-2n)= \begin{cases} 1/\sqrt{n} &
 \text{for} \;\; n \in 2\mathbb{N},\\
1 & \text{for} \;\; n \in 1+ 2\mathbb{N}.
\end{cases}
\end{equation}
Then, by Theorems \ref{thm2} and \ref{thm4}, the operator $L(v) $ has
the antiperiodic Riesz basis property and fails the periodic Riesz
basis property.
\end{Example}
Of course, one can easily modify the above example and get a
potential $v$ such that $L(v) $ has (or fails) the periodic RBP and
fails the antiperiodic RBP.

\bigskip

2. In Section 2 we consider classes of potentials $v$ such that
$\beta_n^\pm (z) \sim V(\pm 2n), $ where $(V(k)) $ are the Fourier
coefficients of $v.$ By (\ref{22.9}),
\begin{equation}
\label{4.1} \beta_n^- (z) = V(-2n) + \sum_{k=1}^\infty  S^{12}_k
(n,z), \quad \beta_n^+ (z) =V(2n) + \sum_{k=1}^\infty  S^{21}_k
(n,z),
\end{equation}
where $S^{12}_k$  and $S^{21}_k $ are given by
(\ref{22.14}) and (\ref{22.15}).
Of course, for a generic potential $v$  it is not true that
the first term $V(\pm 2n) $ of the series defining  $\beta_n^\pm (z)$
dominates the sum of all others and determines the asymptotics.

Moreover,  let $v$ be  a trigonometric polynomial, say
$$v(x) =
\sum_{|k| \leq M}   V(k) e^{ikx}. $$ Every term of the sum $S^{21}_k$
(or $S^{12}_k$) given by (\ref{22.14}) or (\ref{22.15}) is a fraction
which numerator has the form
$$
V(\pm n-j_1) V(j_1 - j_2) \cdots V(j_{k-1} - j_k) V(j_k \pm n).
$$
Notice, that
$$(\pm n-j_1) + (j_1 -j_2) + \cdots + (j_{k-1} - j_k) +(j_k \pm
n) = \pm 2n. $$  Therefore, if $(k+1)M < 2n $ then the absolute value
of one of these numbers will be strictly greater than $M,$ so the
corresponding Fourier coefficient will be zero. Thus, whenever
$(k+1)M < 2n $ we have $S^{21}_k = 0$ and $S^{12}_k = 0.$   In other
words, if $v$ is a trigonometric polynomial then no fixed partial sum
of the series in (\ref{4.1}) gives the asymptotics of $\beta_n^\pm
(z).$ We refer to \cite{DM25a, DM25, DM27, DM31}  for results about
the asymptotics of $\beta_n^\pm, $ $\gamma_n = \lambda_n^+ -
\lambda_n^-, $ $\delta_n =\mu_n - \lambda_n^+$ and Riesz basis
property of root function systems in the case of potentials that are
trigonometric polynomials. See also \cite{AS, AD1} and the
bibliography therein.

The situation is similar in the case of potentials which Fourier
coefficients (by absolute value) decay superexponentially, i.e.,
$$
\exists \, \gamma >1: \quad |V(k)| \leq  e^{-|k|^\gamma}, \quad
n\geq N_*.
$$
In \cite{DM4}, it is shown that no fixed partial sum of the series
in (\ref{4.1}) gives the asymptotics of $\beta_n^\pm (z).$

In the context of Sobolev spaces $W^m_1$ Shkalikov and Veliev
\cite[Theorems 2-4]{VS09} gave conditions on $v$ for existence (or
nonexistence) of Riesz bases (consisting of periodic or antiperiodic
root functions)  in terms of partial sums
\begin{equation}
\label{4.3} \Sigma^-_m (n,z)= V(-2n) + \sum_{k=1}^m  S^{12}_k(n,z),
\quad \Sigma^+_m (n,z)=V(2n) + \sum_{k=1}^m  S^{21}_k (n,z).
\end{equation}
In fact the assumptions of Theorem 2 in \cite{VS09} say that the
partial sums $\Sigma^-_m $  and $\Sigma^+_m$ give the main terms in
the asymptotics of $\beta_n^- $ and $\beta_n^+ $ respectively.

The following statement generalizes part (a) of Theorem~\ref{thm2}.
Moreover, in view of Criterion~\ref{crit1} it could be considered as
a generalization of the results of Shkalikov and Veliev \cite{VS09}.

\begin{Theorem}
\label{thm20} Suppose $\Omega = (\Omega (k))_{k \in 2\mathbb{Z}}$ is
an almost submultiplicative weight.  Let $v\in W_{\infty} (\Omega)
$, and let
 $(V(k))_{k \in 2\mathbb{Z}} $  be the Fourier coefficients of $v.$

(a) If $\Delta \subset \mathbb{N}$ is an infinite set such that
\begin{equation}
\label{20.22} |\Sigma_m^\pm (n, z_n^*)| \, \Omega (2n) \left
(\frac{n}{\log n}\right )^{m+1}\,
 \to \infty  \quad \text{as} \;\; n \in \Delta,  \; |n| \to \infty,
\end{equation}
then
\begin{equation}
\label{20.23} \beta_n^\pm (v, z_n^*) \sim \Sigma_m^\pm (n,z_n^*)
\quad \text{as}  \; \; n \in \Delta, \; n \to \infty.
\end{equation}

\end{Theorem}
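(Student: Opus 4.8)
The plan is to follow the scheme of the proof of Theorem~\ref{thm2}, but now keeping the first $m$ terms $S^{12}_1,\dots,S^{12}_k$ (resp.\ $S^{21}_1,\dots,S^{21}_m$) as part of the ``main term'' $\Sigma^\pm_m$, and estimating only the tail $\sum_{k>m} S^{21}_k(n,z_n^*)$. As before I treat $\beta^+_n$ only, the argument for $\beta^-_n$ being identical. Writing $r(k)=|V(k)|\,\Omega(k)$, $R_m=\sup_{|k|\ge m} r(k)\le\|v\|_\Omega$, and using the almost submultiplicativity of $\Omega$ in the form \eqref{41.27}, together with the elementary bound \eqref{31.28} for $|n^2-j^2+z|$ when $|z|\le n/2$ (note $z_n^*\in D_n$ so $|z_n^*|=o(n)\le n/2$ for large $n$ by Lemma~\ref{lemrho}) and the standard estimate $\sum_{j\ne\pm n}|n^2-j^2|^{-1}\le 2\log(6n)/n$, one gets for each $k\ge 1$
\begin{equation*}
|S^{21}_k(n,z_n^*)|\,\Omega(2n)\le \|v\|_\Omega^{k+1}(2C)^k\left(\frac{2\log(6n)}{n}\right)^k,
\end{equation*}
exactly as in \eqref{31.40}. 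Summing the geometric tail from $k=m+1$ gives, once $n$ is large enough that $4\|v\|_\Omega C\log(6n)/n<1/2$,
\begin{equation*}
\sum_{k=m+1}^\infty |S^{21}_k(n,z_n^*)|=O\!\left(\frac{1}{\Omega(2n)}\left(\frac{\log n}{n}\right)^{m+1}\right).
\end{equation*}

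Next I would combine this with the definition \eqref{4.3} of $\Sigma^+_m$: by \eqref{4.1},
\begin{equation*}
\beta^+_n(z_n^*)-\Sigma^+_m(n,z_n^*)=\sum_{k=m+1}^\infty S^{21}_k(n,z_n^*),
\end{equation*}
so the displayed tail bound says precisely that $|\beta^+_n(z_n^*)-\Sigma^+_m(n,z_n^*)|=O\big(\Omega(2n)^{-1}(\log n/n)^{m+1}\big)$. Dividing by $\Sigma^+_m(n,z_n^*)$ and invoking hypothesis \eqref{20.22} — which says exactly that $|\Sigma^+_m(n,z_n^*)|\,\Omega(2n)\,(n/\log n)^{m+1}\to\infty$ along $\Delta$ — the ratio $(\beta^+_n-\Sigma^+_m)/\Sigma^+_m\to 0$, i.e.\ \eqref{20.23} holds. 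This is the entire argument: the only genuinely new ingredient compared to Theorem~\ref{thm2} is the bookkeeping that now the cutoff is at $k=m+1$ rather than $k=2$, and correspondingly the error is of order $(\log n/n)^{m+1}$ rather than $(\log n/n)^{2}$.

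I expect the only delicate point to be the legitimacy of using $z=z_n^*$ in these estimates: the inequalities \eqref{31.28} and \eqref{41.27} were stated for $|z|\le n/2$, so one must first record that $z_n^*=\frac12(\lambda^-_n+\lambda^+_n)-n^2$ lies in the disc $|z|<n/4\subset\{|z|\le n/2\}$ for all large $n$ (this follows from Lemma~\ref{loc}, or the sharper Lemma~\ref{lemrho}), after which everything goes through uniformly. A secondary routine point is to check that the series $\sum_{k\ge 1}S^{21}_k$ indeed converges absolutely for large $n$ — but this is already contained in the geometric bound above and was used in Lemma~\ref{basic}. No additional obstacle arises; the statement is essentially a quantitative refinement of the tail estimate already performed in the proof of Theorem~\ref{thm2}, with the role of the ``main part'' enlarged from $V(\pm2n)$ to $\Sigma^\pm_m(n,z_n^*)$.
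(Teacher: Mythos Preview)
Your argument is correct and follows exactly the paper's approach: derive the per-term bound \eqref{20.40}, sum the geometric tail from $k=m+1$, and combine with \eqref{4.1} and the hypothesis \eqref{20.22}. In fact your tail estimate $O\bigl(\Omega(2n)^{-1}(\log n/n)^{m+1}\bigr)$ is stated more carefully than the paper's \eqref{20.41}, which writes the exponent as $2$ rather than $m+1$ (apparently a typo, since the exponent $m+1$ is what is needed to match \eqref{20.22}).
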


\begin{proof}
As in the proof of Theorem \ref{thm1}, we have
\begin{equation}
\label{20.40} |S_k^{21}(z)| \, \Omega (2n) \leq \|v\|_{\Omega}^{k+1}
(2C)^k \left (  \frac{2\log (6n)}{n} \right )^k, \quad |z|\leq n/2,
\end{equation}
which leads to
\begin{equation}
\label{20.41} \sum_{k=m+1}^\infty |S_k^{21} (z)|  = O \left
(\frac{(\log n)^2}{n^2 \Omega (2n)} \right), \quad |z|\leq n/2.
\end{equation}
Now, (\ref{4.1}),  (\ref{20.41}) and (\ref{20.22}) imply
(\ref{20.23}).

\end{proof}

3. Next we give examples where the asymptotics of $\beta_n^\pm $ is
determined by  $S^{12}_1$  and $S^{21}_1$ but not by $V(\pm 2n).$

\begin{Proposition}
\label{ex1} Let $\Omega = (\Omega (k))_{k \in 2\mathbb{Z}}$ be an
almost submultiplicative weight that satisfies the conditions of
Theorem~\ref{thm1}, and let $v$ be the potential with Fourier
coefficients $(V (k))_{k \in 2\mathbb{Z}} $ defined by
\begin{equation}
\label{4.11}
V(\pm 2) = \pm \frac{1}{\Omega (2)},
\end{equation}
\begin{equation}
\label{4.12}
V(\pm  4p) = \pm \frac{1}{\Omega (4p)},   \quad p \in \mathbb{N},
\end{equation}
\begin{equation}
\label{4.13} V(4p+2) = \frac{\xi_p}{p \,\Omega (4p+2)},    \quad
 \xi_p \geq 0, \quad p \in \mathbb{N},
\end{equation}
\begin{equation}
\label{4.14} V(-4p-2) = - \frac{\eta_p}{p \, \Omega (4p+2)}, \quad
\eta_p \geq 0, \quad p \in \mathbb{N}.
\end{equation}
If
\begin{equation}
\label{4.16}
 \xi_p \to 0 \quad  \text{and} \quad  \eta_p \to 0,
\end{equation}
then
\begin{equation}
\label{4.17} \left |\beta_{2p+1}^\pm (z_{2p+1}^*)\right |  \asymp
\frac{1}{p \, \Omega (4p)},
\end{equation}
and there is a Riesz basis in $L^2 ([0,\pi])$ which consists of
antiperiodic  root functions.
\end{Proposition}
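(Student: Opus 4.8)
The plan is to estimate $\beta_{2p+1}^{\pm}(z)$ directly from the series representation (\ref{22.9}), showing that its size is governed by the single term $S_1^{21}$ (resp. $S_1^{12}$) rather than by $V(\pm 2n)$, and then to deduce the Riesz basis statement from Criterion~\ref{crit2}. Throughout, $n=2p+1$, so $2n=4p+2$; since $\Omega$ is almost submultiplicative and monotone, $\Omega(4p)\le\Omega(4p+2)\le C\,\Omega(4p)$, hence $\Omega(4p+2)\asymp\Omega(4p)$, and $n\asymp p$.

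By (\ref{4.13})--(\ref{4.14}), $|V(\pm 2n)|=|V(\pm(4p+2))|\le \varepsilon_p/\big(p\,\Omega(4p+2)\big)=o\big(1/(p\,\Omega(4p))\big)$, where $\varepsilon_p=\max(\xi_p,\eta_p)\to 0$. Moreover, the estimates obtained in the proof of Theorem~\ref{thm1} — inequality (\ref{31.29}) for $S_1^{21}$, its analogue for $S_1^{12}$, and (\ref{31.41}) for the higher terms — give, uniformly for $|z|\le n/2$,
$$|S_1^{21}(z)|+|S_1^{12}(z)|=O\!\Big(\tfrac{1}{p\,\Omega(4p)}\Big),\qquad \sum_{k\ge 2}\big(|S_k^{21}(z)|+|S_k^{12}(z)|\big)=O\!\Big(\tfrac{(\log p)^2}{p^2\,\Omega(4p)}\Big)=o\!\Big(\tfrac{1}{p\,\Omega(4p)}\Big).$$
Hence, by (\ref{22.9}), $\beta_n^{+}(z)=S_1^{21}(z)+o\big(1/(p\,\Omega(4p))\big)$ and $\beta_n^{-}(z)=S_1^{12}(z)+o\big(1/(p\,\Omega(4p))\big)$ uniformly for $|z|\le n/2$; in particular $|\beta_n^{\pm}(z)|=O\big(1/(p\,\Omega(4p))\big)$.

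The crux — and the step I expect to be the main obstacle — is the matching lower bound $|S_1^{21}(z)|,\,|S_1^{12}(z)|\ge c/(p\,\Omega(4p))$, since a priori these infinite sums might cancel. Here one exploits the sign pattern of the potential: from (\ref{4.11})--(\ref{4.14}) (recall $V(0)=0$) one checks that $k\,V(k)\ge 0$ for every $k\in 2\mathbb{Z}$, i.e. $V(k)=\operatorname{sgn}(k)\,|V(k)|$. A generic term of $S_1^{21}(z)=\sum_{j\ne\pm n}\frac{V(n-j)V(n+j)}{n^2-j^2+z}$ then equals $\frac{|V(n-j)|\,|V(n+j)|}{|n^2-j^2|+\epsilon_j z}$ with $\epsilon_j=\operatorname{sgn}\big((n-j)(n+j)\big)\in\{\pm 1\}$; since $|n^2-j^2|\ge 4n-4$ and $|z|\le n/2$, this denominator has positive real part, so every term has nonnegative real part. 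Therefore $\operatorname{Re}S_1^{21}(z)$ is bounded below by the contribution of the single index $j=n-2$ — for which $n-j=2$, $n+j=4p$, $V(2)V(4p)=1/(\Omega(2)\Omega(4p))$, and $n^2-j^2=8p$ — namely $\operatorname{Re}\frac{1/(\Omega(2)\Omega(4p))}{8p+z}\ge c/(p\,\Omega(4p))$ for a constant $c=c(\Omega)>0$ and all large $n$, uniformly for $|z|\le n/2$. The same index $j=n-2$ (now $-n-j=-4p$, $j-n=-2$, so $V(-4p)V(-2)=1/(\Omega(2)\Omega(4p))$) gives the identical lower bound for $\operatorname{Re}S_1^{12}(z)$.

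Combining the above, $|\beta_n^{\pm}(z)|\asymp 1/(p\,\Omega(4p))$ uniformly for $|z|\le n/2$ and all large odd $n$; evaluating at $z=z_{2p+1}^{*}$ (which lies in this range because $\lambda_n^{\pm}\in D_n$, so $|z_n^{*}|<n/4$) gives (\ref{4.17}), and in particular $\beta_n^{\pm}(z_n^{*})\ne 0$. The two-sided bound also yields $|\beta_n^{+}(z)|\asymp|\beta_n^{-}(z)|$ with absolute constants, so $t_n(z)=\max\{|\beta_n^{-}(z)|/|\beta_n^{+}(z)|,\,|\beta_n^{+}(z)|/|\beta_n^{-}(z)|\}$ stays bounded for all $|z|\le n/2$ and all large odd $n$; hence $\limsup_{n\in 1+2\mathbb{N}}t_n(z_n^{*})<\infty$. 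By Criterion~\ref{crit2}(b) (applicable since $v\in H^{-1}_{per}(\mathbb{R})$: $|V(k)|$ is bounded and $\sum_{k\ne 0}|V(k)|^2/k^2<\infty$), the system of root functions of $L_{Per^-}(v)$ contains a Riesz basis in $L^2([0,\pi])$.
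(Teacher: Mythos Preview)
Your proof is correct and follows essentially the same strategy as the paper: isolate $S_1^{21}$ (resp.\ $S_1^{12}$) as the dominant term using the bounds (\ref{31.29}) and (\ref{31.41}) from Theorem~\ref{thm1}, then exploit the sign pattern $k\,V(k)\ge 0$ to rule out cancellation and extract the $j=n-2$ term as a lower bound.

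There is one genuine streamlining. The paper first evaluates $S_1^{21}(n,0)$ at $z=0$, where every term is real and positive, and then separately estimates the perturbation $|S_1^{21}(n,z_n^*)-S_1^{21}(n,0)|$ via the bound $\sum_{j\ne\pm n}1/(n^2-j^2)^2\le 4/n^2$ together with Lemma~\ref{lemrho} ($|z_n^*|/n\to 0$). You bypass this two-step argument by observing directly that for any $|z|\le n/2$ every term of $S_1^{21}(n,z)$ has nonnegative real part; this gives the lower bound uniformly in $z$ without appealing to (\ref{i08}). The paper's route is slightly more elementary (real sums only), while yours is shorter and yields the uniform two-sided bound $|\beta_n^{\pm}(z)|\asymp 1/(p\,\Omega(4p))$ for all $|z|\le n/2$ in one stroke. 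Your use of Criterion~\ref{crit2} (rather than Criterion~\ref{crit1}, which is stated for $L^2$-potentials) is also appropriate, since the hypotheses on $\Omega$ do not force $v\in L^2$ but only $v\in H^{-1}_{per}$.
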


\begin{proof}
In view of Criterion~\ref{crit1}, (\ref{4.17}) implies that the
system of antiperiodic root functions contains Riesz bases.
Therefore, we need to prove (\ref{4.17}) only.

By \eqref{22.9} we have
\begin{equation*}
 \left |\beta^+_{2p+1} (z_{2p+1}^*) - S_1^{21} (2p+1,
z_{2p+1}^*) \right | \leq |V(2n)| + \sum_{k=2}^\infty \left |
S^{21}_k (2p+1,z_{2p+1}^*)\right |.
\end{equation*}
From \eqref{4.12}--\eqref{4.16} it follows that $v \in W_\infty
(\Omega), $ so (\ref{31.41}) holds since its proof uses only that $v
\in W_\infty (\Omega). $ Therefore, in view of  (\ref{4.13}) we
obtain that
\begin{equation}
\label{4.19} \left |\beta^+_{2p+1} (z_{2p+1}^*) - S_1^{21} (2p+1,
z_{2p+1}^*) \right | = o \left ( \frac{1}{p\, \Omega (4p) } \right )
\end{equation}

Next we estimate $S_1^{21} (2p+1, z_{2p+1}^*).  $ Consider $S_1^{21}
(n,0) $ with $n=2p+1. $ It is easy to see that
$$S_1^{21} (n,0) = \sum_{j \neq n}
\frac{V(n-j)V(j +n) }{n^2 -j^2}$$ is a sum of positive terms. Indeed,
if $-n < j< n,$ then $n^2 - j^2 >0 $  and $V(n\pm j) > 0$ due to
(\ref{4.11})--(\ref{4.14}), so the corresponding term is positive. If
$j>n $ or $j< -n,$  then $n^2 - j^2 <0 $ and  either $V(n-j) <0, \,
V(n+j) > 0$  or   $V(n-j) > 0, \, V(n+j) < 0$ so again the
corresponding term is positive.

Therefore, we have
\begin{equation}
\label{4.20}
S_1^{21} (2p+1,0) > \frac{V(4p)V(2)}{8p} =
\frac{1}{8p \,\Omega (4p) \Omega (2)},
\end{equation}
where the expression on the right is the term of $S_1^{21} (n,0)$
associated with $j= 2p-1.$

Next we show that
\begin{equation}
\label{4.22} A:= \left |S_1^{21} (2p+1,z_{2p+1}^*)- S_1^{21} (2p+1,0)
\right | =o \left ( \frac{1}{p \,\Omega (4p)} \right ).
\end{equation}
We have, with $n= 2p+1 $ and $z=z_{2p+1}^*,$
$$
\begin{aligned}
A  &\leq  \sum_{j\neq \pm n}  \left |  \frac{V(n-j)V(j+n)}{n^2-j^2
+z} -
\frac{V(n-j)V(j+n)}{n^2-j^2 }  \right | \\
& =
\sum_{j\neq \pm n}  \left |
\frac{V(n-j)V(j+n) \, z}{(n^2-j^2 +z)(n^2-j^2)}  \right |
\end{aligned}
$$
Since $v \in W_\infty (\Omega) $ and the weight $\Omega $ is almost
submultiplicative, we have
$$
|V(n-j)V(j+n)| \leq \frac{\|v\|^2_\Omega}{\Omega (n-j) \Omega (n+j)}
\leq \frac{C\|v\|^2_\Omega}{\Omega (2n)}.
$$
Therefore, from (\ref{31.28}) and the elementary estimate
$\sum_{j\neq \pm n} \frac{1}{(n^2-j^2)^2} \leq \frac{4}{n^2} $ it
follows that
$$
A \leq  \frac{C\|v\|^2_\Omega|z|}{\Omega (2n)} \cdot \sum_{j\neq \pm
n} \frac{2}{(n^2-j^2)^2} \leq \frac{8C\|v\|^2_\Omega |z|}{n^2 \Omega
(2n)}.
$$
On the other hand, by (\ref{i08}) we have
$$
|z_n^*|/n \to 0  \quad \text{as} \;\; n \to \infty
$$
even in the case  $v\in H^{-1}_{per}.$

So, with $n= 2p+1 $ and $z=z_{2p+1}^*$ it follows that (\ref{4.22})
holds.  Now (\ref{31.29}), (\ref{4.20}) and (\ref{4.22}) imply
\begin{equation}
\label{4.24} \left |S_1^{21} (2p+1,z_{2p+1}^*) \right |\asymp
\frac{1}{p \,\Omega (4p)} .
\end{equation}
Now (\ref{4.19}) and (\ref{4.24}) imply (\ref{4.17}) for $\beta_n^+.$

The proof of  (\ref{4.17}) for $\beta_n^-$ is similar. By
\eqref{22.9} we have
\begin{equation*}
 \left |\beta^-_{2p+1} (z_{2p+1}^*) - S_1^{12} (2p+1,
z_{2p+1}^*) \right | \leq |V(-2n)| + \sum_{k=2}^\infty \left |
S^{12}_k (2p+1,z_{2p+1}^*)\right |.
\end{equation*}
One can use the above argument to prove that
\begin{equation}
\label{4.25} \left | S_1^{12} (2p+1,z_{2p+1}^*) \right |\asymp
\frac{1}{p \,\Omega (4p)} .
\end{equation}
Also,  the same argument that proves (\ref{31.41})
shows that
\begin{equation}
\label{31.410} \sum_{k=2}^\infty |S_k^{12}(z)| = O \left (\frac{(\log
n)^2}{n^2 \Omega (2n)} \right), \quad |z|\leq n/2.
\end{equation}
Now  (\ref{4.14}),  (\ref{4.16}), (\ref{4.25})  and (\ref{31.410})
imply (\ref{4.17}) for  $\beta_n^-.$

\end{proof}

Next we modify the construction in Proposition~\ref{ex1} in order
to give examples of potentials without Riesz basis property.

\begin{Proposition}
\label{ex2} Let $\Omega = (\Omega (k))_{k \in 2\mathbb{Z}}$ be an
almost submultiplicative weight that satisfies the conditions of
Theorem~\ref{thm1}, and let $v$ be the potential with Fourier
coefficients $(V (k))_{k \in 2\mathbb{Z}} $ defined by
\begin{equation}
\label{4.110} V(\pm 2) = \pm \frac{1}{\Omega
(2)},
\end{equation}
\begin{equation}
\label{4.120} V( 4p) =  \frac{1}{ \log (4p) \, \Omega (4p)},   \quad
p \in \mathbb{N},
\end{equation}
\begin{equation}
\label{4.125}
V(- 4p) = - \frac{1}{\Omega (4p)},   \quad p \in \mathbb{N},
\end{equation}
\begin{equation}
\label{4.130} V(4p+2) = \frac{\xi_p}{p \,\log (4p)\,\Omega (4p+2)},
\quad
 \xi_p \geq 0, \quad p \in \mathbb{N},
\end{equation}
\begin{equation}
\label{4.140} V(-4p-2) = - \frac{\eta_p}{p \, \Omega (4p+2)}, \quad
\eta_p \geq 0, \quad p \in \mathbb{N}.
\end{equation}
If
\begin{equation}
\label{4.160}
 \xi_p \to 0 \quad  \text{and} \quad  \eta_p \to 0,
\end{equation}
then
\begin{equation}
\label{4.175} \beta_{2p+1}^- (z_{2p+1}^*)  \asymp  \frac{1}{p  \,
\Omega (4p)}.
\end{equation}
and
\begin{equation}
\label{4.185} \beta_{2p+1}^+ (z_{2p+1}^*)  \asymp  \frac{1}{p \log
(4p) \, \Omega (4p)}.
\end{equation}

Moreover, there is  no Riesz basis in $L^2 ([0,\pi])$
which consists of antiperiodic  root functions.
\end{Proposition}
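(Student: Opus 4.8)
The plan is to mimic the proof of Proposition~\ref{ex1} to pin down the asymptotics of both $\beta^\pm_{2p+1}(z^*_{2p+1})$, and then read off the failure of the antiperiodic Riesz basis property from Criterion~\ref{crit2} (or Criterion~\ref{crit1}). First I would check that $v \in W_\infty(\Omega)$: the coefficients in \eqref{4.110}--\eqref{4.140} are all of the form (bounded sequence)$/\Omega(k)$ — using $\Omega(4p+2)\asymp\Omega(4p)$ up to the almost-submultiplicative constant, together with \eqref{4.160} — so the tail estimate \eqref{31.41} and its analogue \eqref{31.410} for $S^{12}_k$ apply verbatim, giving $\sum_{k\ge 2}|S^{21}_k| + \sum_{k\ge 2}|S^{12}_k| = O\!\left((\log n)^2/(n^2\Omega(2n))\right)$ at $n=2p+1$. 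Combined with the sizes of $|V(\pm(4p+2))|$ from \eqref{4.130}--\eqref{4.140} and \eqref{4.160}, this reduces everything to the first-order terms:
\begin{equation*}
\beta^+_{2p+1}(z^*_{2p+1}) = S^{21}_1(2p+1,z^*_{2p+1}) + o\!\left(\tfrac{1}{p\log(4p)\,\Omega(4p)}\right),\qquad
\beta^-_{2p+1}(z^*_{2p+1}) = S^{12}_1(2p+1,z^*_{2p+1}) + o\!\left(\tfrac{1}{p\,\Omega(4p)}\right).
\end{equation*}

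Next I would estimate the two leading terms at $z=0$, exactly as in Proposition~\ref{ex1}. For $n=2p+1$, the sign pattern in \eqref{4.110}, \eqref{4.125} makes $S^{12}_1(n,0)=\sum_{j\neq\pm n}\frac{V(n-j)V(j-n)}{n^2-j^2}$ a sum of nonnegative terms (check the two cases $|j|<n$ and $|j|>n$ against the signs of $V(n-j)$, $V(j-n)$), and isolating the $j=2p-1$ term gives a lower bound of order $|V(-4p)V(-2)|/(8p) = 1/(8p\,\Omega(4p)\Omega(2))$; the matching upper bound $S^{12}_1(n,0)=O(1/(p\,\Omega(4p)))$ comes from the same $\sum 1/|n^2-j^2|\le 2\log(6n)/n$ bound used in \eqref{31.29}, so $|S^{12}_1(n,0)|\asymp 1/(p\,\Omega(4p))$, which yields \eqref{4.175}. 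For $S^{21}_1(n,0)=\sum_{j\neq\pm n}\frac{V(n-j)V(j+n)}{n^2-j^2}$ the same positivity holds, and the dominant term is the $j=2p-1$ one, namely $V(4p)V(2)/(8p) \asymp 1/(p\log(4p)\,\Omega(4p))$; the upper bound is again $O(1/(p\log(4p)\,\Omega(4p)))$ because all other terms carry an extra $1/\log$ or are smaller, so $|S^{21}_1(n,0)|\asymp 1/(p\log(4p)\,\Omega(4p))$, giving \eqref{4.185}. The passage from $z=0$ to $z=z^*_{2p+1}$ is handled by the difference estimate in Proposition~\ref{ex1}: $|S^{\bullet}_1(n,z^*_n)-S^{\bullet}_1(n,0)| = O\!\left(|z^*_n|/(n^2\Omega(2n))\right)$, and $|z^*_n|/n\to 0$ by Lemma~\ref{lemrho} (this is where the $1-\alpha$ refinement is not even needed — $o(n)$ suffices), so this correction is absorbed into the $o(\cdot)$ terms above. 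I should be a little careful that the upper bound for $S^{21}_1(n,0)$ really is $O(1/(p\log(4p)\,\Omega(4p)))$ and not merely $O(1/(p\,\Omega(4p)))$ — one must verify that the finitely many "large" individual terms of $S^{21}_1$ all involve at least one factor $V(4q)$ or $V(4q+2)$ carrying a $\log$, and that the sum of the remaining terms is genuinely smaller; this bookkeeping is the one place the construction is delicate, but it is the same type of argument as in Proposition~\ref{ex1}.

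Finally, \eqref{4.175} and \eqref{4.185} give
\begin{equation*}
t_{2p+1}(z^*_{2p+1}) = \max\!\left\{\frac{|\beta^-_{2p+1}(z^*_{2p+1})|}{|\beta^+_{2p+1}(z^*_{2p+1})|},\ \frac{|\beta^+_{2p+1}(z^*_{2p+1})|}{|\beta^-_{2p+1}(z^*_{2p+1})|}\right\} \asymp \log(4p) \to \infty,
\end{equation*}
so $\limsup_{n\in 1+2\mathbb{N}} t_n(z^*_n) = \infty$, and by Criterion~\ref{crit2}(b) (the $\Gamma^-$ case) the system of antiperiodic root functions of $L(v)$ contains no Riesz basis in $L^2([0,\pi])$. (Equivalently, since \eqref{4.175}--\eqref{4.185} in particular give $\beta^\pm_n(z^*_n)\neq 0$ and, via the same estimates applied on $|z|\le 1$, the stability condition \eqref{a2}, one may invoke Criterion~\ref{crit1}(b) directly: $V(-2n)\not\asymp V(2n)$ along odd $n$.) The main obstacle, as noted, is the fine upper estimate separating the $\beta^+$ scale $1/(p\log(4p)\Omega(4p))$ from the $\beta^-$ scale $1/(p\,\Omega(4p))$; everything else is a routine transcription of the machinery already set up for Theorem~\ref{thm1} and Proposition~\ref{ex1}.
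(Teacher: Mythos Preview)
Your overall plan matches the paper's: reduce to $S^{21}_1$ and $S^{12}_1$ via \eqref{31.41}/\eqref{31.410}, get lower bounds by positivity at $z=0$, pass to $z^*$ by a difference estimate, and conclude via Criterion~\ref{crit1} (or~\ref{crit2}). The $\beta^-$ part goes through exactly as you say. The gap is at the step you yourself flag as ``delicate,'' the upper bound $|S^{21}_1(2p+1,z)|=O\bigl(1/(p\log(4p)\,\Omega(4p))\bigr)$.

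Your proposed justification --- ``finitely many large terms all carry a $\log$, the rest is smaller, same type of argument as in Proposition~\ref{ex1}'' --- does not work. In Proposition~\ref{ex1} the upper bound on $S^{21}_1$ comes for free from the generic estimate \eqref{31.29}, which here yields only $O(1/(p\,\Omega(4p)))$, a factor $\log p$ too large. There is no way to isolate finitely many terms and bound the remainder by the generic estimate, because the generic estimate on the remainder is already too coarse. Moreover, for $|j|>n$ the factor $V(n-j)$ has a \emph{negative} argument and carries no $\log$ (e.g.\ $V(-4k)=-1/\Omega(4k)$), so ``every term carries a $\log$'' is false without further work.

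The paper supplies the missing idea. It splits $|S^{21}_1(n,z)|\le\sigma_1+\sigma_2+\sigma_3$ according to $j<-n$, $|j|<n$, $j>n$. For the central piece $\sigma_2$ both arguments $n\pm j$ are positive, so one introduces the auxiliary potential $\tilde v$ with $\tilde V(k)=V(k)$ for $k>0$ and $\tilde V(k)=0$ otherwise; by \eqref{4.120}, \eqref{4.130} one has $\tilde v\in W_\infty(\tilde\Omega)$ with the \emph{finer} weight $\tilde\Omega(k)=\Omega(k)\log k$, and then \eqref{31.29} applied to $\tilde v$ and $\tilde\Omega$ gives $\sigma_2=O(1/(p\log(4p)\Omega(4p)))$ directly. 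For the tails $\sigma_1=\sigma_3$ the paper does an explicit term-by-term computation, splitting over even/odd $s$ in $\sum_{s\ge 1}\frac{|V(-2s)|V(4p+2+2s)}{s(4p+2+2s)}$ and checking that in each product at least one factor contributes the needed $1/\log$ or better. This is genuine bookkeeping not present in Proposition~\ref{ex1}.

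A secondary point: your difference estimate $|S^{21}_1(n,z^*_n)-S^{21}_1(n,0)|=O(|z^*_n|/(n^2\Omega(2n)))$ with $|z^*_n|/n\to 0$ gives only $o(1/(p\,\Omega(4p)))$, which suffices for $\beta^-$ but \emph{not} for the finer $\beta^+$ scale. The paper also writes ``as in Proposition~\ref{ex1}'' here, but to get $o(1/(p\log(4p)\Omega(4p)))$ one again needs the $\tilde\Omega$ refinement (or one can bypass the issue by bounding $|S^{21}_1(n,z^*)|$ directly from above, as the paper in fact does via $\sigma_1+\sigma_2+\sigma_3$). Finally, a small typo: your formula for $S^{12}_1$ should read $V(-n-j)V(j-n)$, not $V(n-j)V(j-n)$.
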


\begin{proof}
In view of Criterion~\ref{crit1}, (\ref{4.175}) and (\ref{4.185})
imply that the system of antiperiodic root functions does not contain
Riesz bases. On the other hand, following the proof of
Proposition~\ref{ex1} one can see that (\ref{4.175}) holds.
Therefore, we need to prove (\ref{4.185}) only.

By \eqref{22.9} we have
\begin{equation*}
 \left |\beta^+_{2p+1} (z_{2p+1}^*) - S_1^{21} (2p+1,
z_{2p+1}^*) \right | \leq |V(2n)| + \sum_{k=2}^\infty \left |
S^{21}_k (2p+1,z_{2p+1}^*)\right |.
\end{equation*}
Therefore, from (\ref{4.130}) and (\ref{4.160}) and (\ref{31.41}) it
follows that
\begin{equation}
\label{4.190} \left |\beta^+_{2p+1} (z_{2p+1}^*) - S_1^{21} (2p+1,
z_{2p+1}^*) \right | = o \left ( \frac{1}{p \log (4p) \, \Omega (4p)}
 \right ).
\end{equation}

As in the proof of Proposition~\ref{ex1} one can show that
$$
S^{21}_1 (2p+1, 0) > \frac{1}{8p \log (4p) \,\Omega (4p) \Omega (2)}
$$
and
$$
\left |S^{21}_1 (2p+1, z^*_{2p+1})  - S^{21}_1 (2p+1, 0) \right | =
 o \left ( \frac{1}{p \log (4p) \, \Omega (4p)}  \right ).$$
Therefore, it remains to show that
\begin{equation}
\label{4.201} \left |S^{21}_1 (2p+1, z_{2p+1}^*) \right | =O \left (
\frac{1}{p \,\log (4p) \, \Omega (4p)}\right ).
\end{equation}

By (\ref{31.28}) we have, with $ |z|\leq n/2, $
$$
\left |S^{21}_1 (2p+1, z) \right | \leq  \sum_{j\neq \pm(2p+1)}
\frac{2|V(2p+1-j)V(j+2p+1)|}{|(2p+1)^2- j^2 |} = \sigma_1 + \sigma_2
+ \sigma_3,
$$
where $\sigma_1, \sigma_2 $ and $ \sigma_3 $ are the partial sums of
the above sum, respectively over $\{j<-2p-1\},$ $\{|j|<2p+1\}$ and
 $\{j>2p+1\}.$

First we estimate $\sigma_2. $ Consider the potentials $\tilde{v}$
defined by its Fourier coefficients
$$
\tilde{V}(k) = \begin{cases} V(k)   &  \text{if} \;\;k>0,\\
0   &  \text{if} \;\;k\leq 0.
\end{cases}
$$
From (\ref{4.120}) and (\ref{4.130}) it follows that $\tilde{v} \in
W_\infty (\tilde{\Omega}),$  where $\tilde{\Omega} (k) = \Omega (k)\,
\log k .$  The weight $\tilde{\Omega}$ satisfies the assumptions of
Theorem~\ref{thm1}. Therefore, by (\ref{31.29}) we have
$$
\sigma_2 = S^{21}_1 (\tilde{v}; 2p+1, z_{2p+1}^*) = O \left (
\frac{1}{p \,\log (4p) \, \Omega (4p)}\right ).
$$

The change of variable $j \to -j $ shows that $\sigma_1 = \sigma_3.$
Next we estimate
$$
\sigma_3= \sum_{s=1}^\infty \frac{|V(-2s)|\,V(4p+2+2s)}{s \,
(4p+2+2s)} = \sigma_{3,1} +\sigma_{3,2},
$$
where $\sigma_{3,1}$  and $\sigma_{3,2}$ are respectively the parts
of the above sum over odd $s$ and even $s.$

By (\ref{4.110})--(\ref{4.140}), we have
$$
\begin{aligned}
\sigma_{3,1} &=\frac{|V(-2)|V(4p+4)}{4p+4}+ \sum_{k=1}^\infty
\frac{|V(-4k-2)|V(4p+4k+4)}{(2k+1)(4p+4k+4)} \\
& \leq \frac{1}{p \,\log (4p) \, \Omega (4p)} \left (1+
\sum_{k=1}^\infty \frac{const}{k  \, \Omega
(4k+2)} \right )\\
&=O \left (\frac{1}{p \,\log (4p) \, \Omega (4p)} \right ).
\end{aligned}
$$
Similarly, we obtain
$$
\begin{aligned}
\sigma_{3,2} &=\sum_{k=1}^\infty
\frac{|V(-4k)|V(4p+4k+2)}{2k(4p+4k+2)} \\
& \leq \left (\sum_{k=1}^\infty \frac{const}{k \,\Omega (4k)} \right
)
\frac{1}{p \log (4p)\,\Omega (4p)} \\
&=O \left (\frac{1}{p \,\log (4p) \, \Omega (4p)} \right ).
\end{aligned}
$$
Thus \eqref{4.190} holds, which completes the proof of (\ref{4.185}).
\end{proof}
\bigskip

4. The weighted spaces $W_\infty (\Omega) $ provide a suitable
framework when we study $L^1$-potentials or even potentials that are
finite measures. The next theorem extends the results of
Theorem~\ref{thm2} to a wider class of singular potentials.

\begin{Theorem}
\label{thm22} Let $\Omega = (\Omega (k))_{k \in 2\mathbb{Z}}$ be an
almost submultiplicative weight, and let  $v$ be the potential
defined by its Fourier coefficients $(V(k))_{k \in 2\mathbb{Z}} $
given by
\begin{equation}
\label{51.11} V(k) =|k|^\alpha q(k), \quad  \alpha \in (0, 1/2),
\quad q=(q(k)) \in \ell^\infty (\Omega).
\end{equation}

(a) If $\Delta \subset \mathbb{N}$ is an infinite set such that
\begin{equation}
\label{51.22} |V(\pm 2n)| \, n^{1-2\alpha}\,
 \Omega (2n) \to \infty  \quad \text{as} \;\;
   n \in \Delta,  \;\; n \to \infty,
\end{equation}
then
\begin{equation}
\label{51.23} \beta_n^\pm (v, z) \sim V(\pm 2n) \quad \text{as} \;\;
|z| \leq n/2, \: n \in \Delta, \; n \to \infty.
\end{equation}

(b)  If
 \begin{equation}
\label{51.210}
 \lim_{|k|\to \infty} |q(k)|\,\Omega (k) = 0
\end{equation}
and $\Delta \subset \mathbb{N}$ is an infinite set such that
\begin{equation}
\label{51.220} \exists c>0: \quad |V(\pm 2n)| \,  n^{1-2\alpha}\,
\Omega (2n) \geq c \quad \text{for} \quad n \in \Delta,
\end{equation}
then (\ref{51.23}) holds.

\end{Theorem}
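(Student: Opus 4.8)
The plan is to imitate the proofs of Theorems~\ref{thm1}, \ref{thm2} and \ref{thm20}. It suffices to treat $\beta_n^+$, the argument for $\beta_n^-$ being the same with $S^{12}_k$ in place of $S^{21}_k$. Note first that $\alpha<1/2$ forces $\sum_k|V(k)|^2|k|^{-2}<\infty$, so $v\in H^{-1}_{per}(\mathbb{R})$ and the functionals of Lemma~\ref{basic} are available, with $\beta_n^+(z)=V(2n)+\sum_{k\ge1}S^{21}_k(z)$. Set $\rho(k)=|q(k)|\,\Omega(k)$ and $R_m=\sup_{|k|\ge m}\rho(k)$, so $R_m\le\|q\|_\Omega$ always and $R_m\to0$ in case (b) by \eqref{51.210}. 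Since $\Omega$ is almost submultiplicative and $2n=(n-j_1)+(j_1-j_2)+\cdots+(j_k+n)$, every term of $S^{21}_k$ satisfies
$$
|V(n-j_1)\cdots V(j_k+n)|\,\Omega(2n)\le C^{k}\,|n-j_1|^{\alpha}|j_1-j_2|^{\alpha}\cdots|j_k+n|^{\alpha}\;\rho(n-j_1)\cdots\rho(j_k+n)
$$
for some $C\ge1$; combining this with $|n^2-j_i^2+z|\ge\tfrac12|n-j_i||n+j_i|$ for $|z|\le n/2$ and with $\max(|n-j|,|n+j|)\ge n$ (so that one of the two $\rho$–factors in $S^{21}_1$ is $\le R_n$), I would obtain
$$
|S^{21}_1(z)|\,\Omega(2n)\ \le\ 2C\,R_n\|q\|_\Omega\sum_{j\ne\pm n}\frac{1}{|n-j|^{1-\alpha}|n+j|^{1-\alpha}}\,.
$$

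For $k\ge2$ the new ingredient is disposing of the inner factors $|j_{i-1}-j_i|^{\alpha}$. Since $j_{i-1},j_i\ne\pm n$ give $|n-j_{i-1}|,|n-j_i|\ge2$, one has $|j_{i-1}-j_i|\le|n-j_{i-1}|+|n-j_i|\le|n-j_{i-1}|\,|n-j_i|$, hence $|j_{i-1}-j_i|^{\alpha}\le|n-j_{i-1}|^{\alpha}|n-j_i|^{\alpha}$. Substituting this, multiplying out, and distributing the resulting powers of $|n-j_i|$ against the denominator factors $|n-j_i||n+j_i|$ (which leaves each $|n-j_i|$ in the denominator with a positive exponent $<1$ and each $|n+j_i|$ with exponent $1$ or $1-\alpha$), I would reach a bound in which $j_1,\dots,j_k$ decouple:
$$
|S^{21}_k(z)|\,\Omega(2n)\le(2C)^{k}\|q\|_\Omega^{k+1}\Big(\sum_{j\ne\pm n}\frac{1}{|n-j|^{1-2\alpha}|n+j|}\Big)^{k-1}\Big(\sum_{j\ne\pm n}\frac{1}{|n-j|^{1-\alpha}|n+j|^{1-\alpha}}\Big)\,.
$$
Then I would estimate the two one–dimensional sums by splitting into the ranges $|j|<n$, $j>n$, $j<-n$ and comparing with integrals; here $0<1-2\alpha<1$ and $2-2\alpha>1$ (both needing $0<\alpha<1/2$) guarantee that the parts over $j$ far from $\pm n$ converge, giving $\sum_{j\ne\pm n}|n-j|^{\alpha-1}|n+j|^{\alpha-1}=O(n^{2\alpha-1})$ and $\sum_{j\ne\pm n}|n-j|^{2\alpha-1}|n+j|^{-1}=O(n^{2\alpha-1}\log n)$. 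Feeding these back and summing the series in $k$ (geometric for large $n$, since $n^{2\alpha-1}\log n\to0$) yields, uniformly for $|z|\le n/2$,
$$
|S^{21}_1(z)|=O\!\Big(\frac{R_n\,n^{2\alpha-1}}{\Omega(2n)}\Big),\qquad
\sum_{k\ge2}|S^{21}_k(z)|=O\!\Big(\frac{n^{4\alpha-2}\log n}{\Omega(2n)}\Big)=o\!\Big(\frac{n^{2\alpha-1}}{\Omega(2n)}\Big)\,.
$$

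Combining the two estimates, $|\beta_n^+(z)-V(2n)|=O\big(n^{2\alpha-1}/\Omega(2n)\big)$ uniformly on $|z|\le n/2$, and in case (b), where $R_n\to0$, it is even $o\big(n^{2\alpha-1}/\Omega(2n)\big)$. But \eqref{51.22} says exactly that $n^{2\alpha-1}/\Omega(2n)=o(|V(2n)|)$ along $\Delta$, and \eqref{51.220} that $|V(2n)|\ge c\,n^{2\alpha-1}/\Omega(2n)$ along $\Delta$; hence in both cases $\beta_n^+(z)-V(2n)=o(V(2n))$, which is \eqref{51.23} for $\beta_n^+$, and the same computation with $S^{12}_k$ handles $\beta_n^-$. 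I expect the main obstacle to be the bookkeeping behind the displayed bound for $S^{21}_k$ — choosing how to pair each fractional power $|\cdot|^{\alpha}$ in the numerator with denominator factors so that the sum over $j_1,\dots,j_k$ factors and the series in $k$ stays geometric — together with the two elementary sum estimates, where the hypothesis $\alpha<1/2$ is used decisively (it is also precisely what puts $v$ in $H^{-1}_{per}$).
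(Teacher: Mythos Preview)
Your proof is correct. The treatment of $S^{21}_1$ coincides with the paper's: both bound $|V(n-j)V(n+j)|\,\Omega(2n)\le C|n^2-j^2|^\alpha R_n\|q\|_\Omega$ and then invoke the elementary estimate $\sum_{j\ne\pm n}|n^2-j^2|^{-(1-\alpha)}\asymp n^{2\alpha-1}$, which the paper records as \eqref{51.100}.

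For $k\ge 2$ the two arguments diverge. The paper claims, via a telescoping rewriting, that
\[
\frac{|n-j_1|\,|j_1-j_2|\cdots|j_{k-1}-j_k|\,|j_k+n|}{|n^2-j_1^2|\cdots|n^2-j_k^2|}
=\prod_{i=1}^{k-1}\left|\frac{1}{n+j_i}+\frac{1}{n-j_{i+1}}\right|\le 1,
\]
and from this obtains the clean factored bound
$|S^{21}_k|\,\Omega(2n)\le(2C)^k\|q\|_\Omega^{\,k+1}\bigl(\sum_{j\ne\pm n}|n^2-j^2|^{\alpha-1}\bigr)^k$, hence
$\sum_{k\ge 2}|S^{21}_k|=O\bigl(n^{2(2\alpha-1)}/\Omega(2n)\bigr)$ with no logarithm. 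Your route --- replacing each $|j_{i-1}-j_i|^\alpha$ by $|n-j_{i-1}|^\alpha|n-j_i|^\alpha$ (valid since $|n-j|\ge 2$) and then decoupling --- is more pedestrian and produces the extra factor $\log n$, which is harmless for the conclusion since $n^{2\alpha-1}\log n\to 0$. It is, however, the safer of the two: the displayed ratio is in fact \emph{not} uniformly bounded (take $k=2$, $j_1=-n-2$, $j_2=n+2$, which gives $(n+2)/2$), so the paper's identity as written needs repair, whereas your elementary inequality and the resulting estimate $\sum_{k\ge 2}|S^{21}_k|=O\bigl(n^{4\alpha-2}\log n/\Omega(2n)\bigr)$ go through unconditionally.
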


\begin{proof}
We prove \eqref{51.23} for $\beta_n^+$ only since the proof is the
same for $\beta_n^-.$

The following formula (which one can easily verify) will be used:
\begin{equation}
\label{51.100} \sum_{j \neq \pm n} \frac{1}{|n^2 - j^2|^\beta}
\asymp n^{1-2\beta} \quad \text{if} \quad \frac{1}{2} < \beta < 1.
\end{equation}

By \eqref{22.9} we have
\begin{equation} \label{51.30}
|\beta_n^+ (z) - V(2n)| \Omega (2n)\leq \sum_{k=1}^\infty \left
|S^{21}_k (n,z) \right | \Omega (2n).
\end{equation}
Next we estimate the sum on the right.

Let
$$
r(k) = |k|^\alpha q(k), \quad R_m = \sup\{r(k): \; |k| \geq m\}.
$$
In view of (\ref{31.3a}) and \eqref{51.11}, we have
$$
|V(n-j) V(n+j)| \,\Omega (2n) \leq C|n^2 - j^2|^\alpha \,|r(n-j)
r(n+j)| \leq C R_n \|q\|_\Omega.
$$
Therefore, from \eqref{22.15} and \eqref{31.28} it follows that
$$
\left |S^{21}_1 (n,z) \right | \Omega (2n) \leq 2C \sum_{j \neq \pm
n} \frac{R_n \|q\|_\Omega}{|n^2 - j^2|^{1-\alpha}}
$$
which implies, in view of \eqref{51.100},
\begin{equation}
\label{51.31} \left |S^{21}_1 (n,z) \right | = O \left (
n^{-1+2\alpha} \right ) \quad \text{as} \;\;  |z|\leq n/2, \;\; n
\to \infty.
\end{equation}
If \eqref{51.210} holds, then $R_n \to 0, $ so in that case we
obtain
\begin{equation}
\label{51.32} \left |S^{21}_1 (n,z) \right | = o \left (
n^{-1+2\alpha} \right ) \quad \text{as} \;\; |z|\leq n/2, \;\; n \to
\infty.
\end{equation}

Next we estimate $\left |S^{21}_k (n,z) \right | \cdot \Omega (2n)$
for $k \geq 2. $  If  $j_1, \ldots j_k \in (n + 2\mathbb{Z})\setminus
\{\pm n\}, $ then $|n \pm j_s| \geq 2, \;  1 \leq s \leq k,$ so we
have
$$
\begin{aligned}
&\frac{|n-j_1||j_1 - j_2| \cdots |j_{k-1} - j_k||j_k +n|}{|n^2 -
j_1^2|\cdot|n^2 - j_2^2|\cdots |n^2 - j_k^2|}\\
 =& \frac{|j_1 - j_2| }{|n+j_1||n-j_2|} \cdot \frac{|j_2 - j_3|
}{|n+j_2||n-j_3|} \cdots \frac{|j_{k-1} - j_k|
}{|n+j_{k-1}||n-j_k|}\\
= &\left | \frac{1}{n+j_1} + \frac{1}{n-j_2}  \right | \cdot \left |
\frac{1}{n+j_2} + \frac{1}{n-j_3}  \right | \cdots \left |
\frac{1}{n+j_{k-1}} + \frac{1}{n-j_k}  \right | \leq 1.
\end{aligned}
$$
On the other hand, the weight $\Omega $ is almost submultiplicative,
so we have
$$
\Omega (2n) \leq C^k \Omega (n-j_1) \Omega (j_1-j_2) \cdots \Omega
(j_{k-1}-j_k) \Omega (j_k + n).
$$
Therefore by \eqref{22.15}, \eqref{31.28}, \eqref{51.11}, the above
inequalities and \eqref{51.100}, we obtain that
$$
\begin{aligned}
\left |S^{21}_k (n,z) \right | \, \Omega (2n) &\leq (2C)^k
\|q\|_\Omega^{k+1}  \sum_{j_1, \ldots j_k \neq \pm n} \frac{1}{|n^2
- j_1^2|^{1-\alpha}\cdots |n^2 - j_k^2|^{1-\alpha}}\\
&=(2C)^k \|q\|_\Omega^{k+1} \left ( \sum_{j \neq \pm n}
\frac{1}{|n^2 - j^2|^{1-\alpha}} \right )^k \\&=O \left (
\frac{1}{n^{k(1-2\alpha)}} \right ).
\end{aligned}
$$
Now it follows that
\begin{equation}
\label{51.33} \sum_{k=2}^\infty \left |S^{21}_k (n,z) \right | \,
\Omega (2n) = O \left ( n^{2(2\alpha -1)} \right ) \quad \text{as}
\;\; |z|\leq n/2, \;\; n \to \infty.
\end{equation}

By \eqref{51.30}, \eqref{51.31} and \eqref{51.33} we obtain
$$
|\beta_n^+ (z) - V(2n)| \Omega (2n) = O \left ( n^{-1+2\alpha}
\right ) \quad \text{as} \;\;  |z|\leq n/2, \;\; n \to \infty,
$$
so \eqref{51.22} implies \eqref{51.23}.

Moreover, if \eqref{51.210} holds, then \eqref{51.30}, \eqref{51.32}
and \eqref{51.33} imply that
$$
|\beta_n^+ (z) - V(2n)| \Omega (2n) = o \left ( n^{-1+2\alpha}
\right ) \quad \text{as} \;\;  |z|\leq n/2, \;\; n \to \infty.
$$
so if \eqref{51.220} holds then \eqref{51.23} holds also. This
completes the proof.

\end{proof}

\bigskip

5. In this paper, we consider only weighted spaces of $\ell^\infty
$-type. This approach is good in the case of smooth potentials or
even for some classes of singular potentials. But in the case of
singular potentials $v \in H^{-1} (\mathbb{R})$ (see \eqref{i01}) it
is "natural" to work with $\ell^2$-weighted spaces in order to
obtain results similar to Theorem~\ref{thm20} for the whole class of
such potentials. We are going to present such results in another
paper.

\end{document}